\numberwithin{equation}{section} 
\newenvironment{psmallmatrix}
  {\left(\begin{smallmatrix}}
  {\end{smallmatrix}\right)}
\definecolor{forestgreen}{rgb}{0.0, 0.27, 0.13}
\definecolor{ultramarine}{rgb}{0.07, 0.04, 0.56}
\newtheorem{thm}{Theorem}[section]
\newtheorem{lem}[thm]{Lemma}
\newtheorem{prop}[thm]{Proposition}
\newtheorem{defn}[thm]{Definition}
\newtheorem{conj}[thm]{Conjecture}
\newtheorem{rmk}[thm]{Remark}
\newenvironment{pf}{{\noindent \bf Proof:\ }}{\hfill
$\Box$ \bigskip}
\newcommand{\QQ}{\mathbb{Q}}
\newcommand{\ZZ}{\mathbb{Z}}
\newcommand{\NN}{\mathbb{N}}
\newcommand{\CC}{\mathbb{C}}
\newcommand{\FF}{\mathbb{F}}
\def\ord{\mathrm{ord}}
\def\cA{{\mathcal A}}
\def\cC{{\mathcal C}}
\def\cG{{\mathcal G}}
\def\fp{\mathfrak{p}}
\begin{document}
\setlength{\unitlength}{1mm}

\title[$p$-adic properties for Taylor coefficients of modular forms on $\Gamma_1(4)$] 
{$p$-adic properties for Taylor coefficients of half-integral weight modular forms on $\Gamma_1(4)$}

\author{Jigu Kim and Yoonjin Lee}

\date{}

\subjclass{11F33, 11F37.}
\keywords{modular forms, Taylor coefficients, congruences.}

\maketitle

\noindent{\small {\bf Abstract.} 
For a prime $p\equiv 3$ $(\text{mod }4)$ and $m\ge 2$, Romik raised a question about whether the Taylor coefficients around $\sqrt{-1}$ of the classical Jacobi theta function $\theta_3$ eventually vanish modulo $p^m$.  This question can be extended to a class of modular forms of half-integral weight on $\Gamma_1(4)$ and CM points; in this paper, we prove an affirmative answer to it for primes $p\ge5$. Our result is also a generalization of the results of Larson and Smith for modular forms of integral weight on $\mathrm{SL}_2(\ZZ)$.}

\section{Introduction and results}
Let $\ZZ$ be the ring of integers, $k\in \ZZ$ or $\frac{1}{2}+\ZZ$, and $p\ge 5$ be a prime. Let $R\subset \CC$ be a commutative ring. Let $M_{k}(R,\Gamma_1(4))$ be the space of modular forms of integral or half-integral weight on $\Gamma_1(4)$ whose Fourier coefficients at the cusp $\infty$ belong to $R$.

For $z=x+iy$ in the upper-half plane $\mathbb{H}$, let $q=e^{2\pi i z}$ and $f(z)=\sum a_nq^n\in M_k(R,\Gamma_1(4))$.
Let $D$ be the derivative with respect to $2\pi i z$; that is,
$$Df:=\frac{1}{2\pi i}\frac{d}{d z}f=q\frac{d}{dq}f.$$
We denote by $\partial$ the {\it 
Shimura-Maass differential operator}, 
which is defined by
$$\partial f:=\partial_k f:=Df-\frac{k}{4\pi y}f.$$
For a given point $\tau_0=x_0+iy_0\in \mathbb{H}$,
we note that the transformation 
$z\mapsto w={(z-\tau_0)}/{(z-\overline{\tau}_0)}$
maps $\mathbb{H}$ to the open unit disc around $0$ and sends $\tau_0$ to the origin.
The Taylor expansion of $f(z)=f\big({(\tau_0-\overline{\tau}_0w)}/{(1-w)}\big)$ around $w=0$ gives rise to
\begin{equation}\label{TS-formula}
(1-w)^{-k}f\Big(\frac{\tau_0-\overline{\tau}_0w}{1-w}\Big)
=\sum_{n=0}^\infty \partial^n f(\tau_0)
\frac{(-4\pi y_0 w)^n}{n!},
\end{equation}
where $|w|<1$ and for $k\in \frac{1}{2}+\ZZ$ we take the branch of the square root having argument in $(-\pi/2,\pi/2]$. (As mentioned in \cite{VW14}, we note that there is the sign error for the formula in \cite[Proposition 17]{Zag08} and \cite[(1.1)]{LS14}.)
In order to obtain an algebraic number from the Taylor coefficient, we further suppose that $\tau_0$ is a CM point and divide $\partial^n f(\tau_0)$ by a power of  some transcendental factor. 

Let $K$ be an imaginary quadratic field of discriminant $d$ and $\tau_0$ be a CM point in $K$.
Let $h(d)$ be the class number of $K$, $w_d$ the number of roots of unity in the ring of integers $\mathcal{O}_K$, and $\chi_d$ the quadratic character associated to $K$.
We define $\Omega_K\in\CC$ as
$$\Omega_K:=
\frac{1}{\sqrt{2\pi |d|}} 
\left(\prod_{k=1}^{d} \gamma\left(
\frac{k}{|d|}\right)^{\chi_d(k)}\right)^{\frac{w_d}{4h(d)}},$$
where $\gamma$ is the Gamma function. 
We note that $\Omega_K$ is related with the {\it Chowla-Selberg formula}. 
It is also well-known that for $f\in M_{k}(\overline{\QQ},\Gamma')$ with any congruence subgroup 
$\Gamma'\subset \mathrm{SL}_2(\ZZ)$ such that $\Gamma'\subset \Gamma_1(4)$ if $k\in\frac{1}{2}+\ZZ$, we have
$$\partial^n f(\tau_0)\in \overline{\QQ}\cdot \Omega_K^{2n+k}$$
(we refer to \cite[p. 86]{Zag08}). Then we can take a complex number $\Omega_{\tau_0}\in \overline{\QQ}\cdot \Omega_K$ 
such that for every $k\in\ZZ$ or $\frac{1}{2}+\ZZ$, every $f\in M_k({\ZZ},\Gamma_1(4))$ and all primes $p\ge 5$, we have 
\begin{equation}\label{eq-trascendental}
\min_{\substack{n\in\NN,\,k\in\ZZ \text{ or } \frac{1}{2}+\ZZ,\\f\in M_k({\ZZ},\Gamma_1(4))}}
\left\{\ord_p\left(
\frac{\partial^n f(\tau_0)}{\Omega_{\tau_0}^{2n+k}}\right)
\right\}\ge 0, 
\end{equation}
where the $p$-adic valuation $\ord_p$ is mentioned in \eqref{p-ord};
we note that $\Omega_{\tau_0}$ only depends on $\tau_0$.

Let $k'\in2\ZZ$ and $\Gamma(1):=\mathrm{SL}_2(\ZZ)$. Larson and Smith defined $\Omega_{\tau_0}^{'}$ for $M_{k'}(\ZZ,\Gamma(1))$ similarly and proved the following theorem for a CM point $\tau_0$ in $K$ and an inertial or ramified prime $p$ in $K$.
\begin{thm}\label{LS-thm}{\rm \cite[Theorem 1.3]{LS14}}
Let $p\ge 5$ be a prime and $k'\in2\ZZ$.
Let $f\in M_{k'}(\ZZ,\Gamma(1))$, and let $\tau_0$ be a CM point in $\QQ(\sqrt{d})$ of discriminant $d<0$. 
If $p$ satisfies $\big(\frac{d}{p}\big)
\in\{0,-1\}$, then we have
$$\frac{(\partial^n f)(\tau_0)}{\Omega_{\tau_0}^{'2n+k}}
\equiv 0 \quad (\text{mod } p^m)$$
for all $m\ge 2$ and $n\ge (m-1) p^2$.
\end{thm}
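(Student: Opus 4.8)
The plan is to turn the transcendental Taylor coefficient $\partial^n f(\tau_0)$ into the value of an explicit $p$-integral algebraic recursion, and then to extract its $p$-divisibility from the supersingular reduction of $\tau_0$. First I would make the iterated operator holomorphic. Writing $\vartheta_{k'} := D - \frac{k'}{12}E_2$ for the Serre derivative and $E_2^* := E_2 - \frac{3}{\pi y}$ for the weight-two nonholomorphic Eisenstein series, one has $\partial_{k'} f = \vartheta_{k'} f + \frac{k'}{12}E_2^* f$. Since both $\vartheta$ and multiplication by $E_2^*$ raise the weight by $2$, expanding $\partial^n$ (the weight rising by $2$ at each application) gives
$$\partial^n f = \sum_{j=0}^{n} P_{n,j}\,(E_2^*)^j,$$
where each $P_{n,j}$ is a holomorphic modular form on $\Gamma(1)$ of weight $k'+2n-2j$; because $f$, $E_4$, $E_6$ have coefficients in $\mathbb{Z}_{(p)}$ for $p\ge 5$ and Ramanujan's identities $\vartheta E_4=-\frac13 E_6$, $\vartheta E_6=-\frac12 E_4^2$ have $p$-unit coefficients, the $P_{n,j}$ are $\mathbb{Z}_{(p)}$-polynomials in $E_4$ and $E_6$.

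Next I would record that $E_2^*$, $E_4$, $E_6$ form a system closed under $\partial$, namely $\partial E_2^* = \frac{1}{12}((E_2^*)^2 - E_4)$, $\partial E_4 = \frac13(E_2^* E_4 - E_6)$, $\partial E_6 = \frac12(E_2^* E_6 - E_4^2)$. Thus $\partial$ is realised by the $\mathbb{Z}_{(p)}$-derivation $\Xi$ of $\mathbb{Z}_{(p)}[\beta,\alpha_4,\alpha_6]$ determined by $\Xi\beta=\frac{1}{12}(\beta^2-\alpha_4)$, $\Xi\alpha_4=\frac13(\beta\alpha_4-\alpha_6)$, $\Xi\alpha_6=\frac12(\beta\alpha_6-\alpha_4^2)$. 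Evaluating at $\tau_0$ and dividing by the appropriate power of $\Omega_{\tau_0}'$ is a graded ring homomorphism sending $E_2^*,E_4,E_6$ to the CM values $\beta_0 := E_2^*(\tau_0)/\Omega_{\tau_0}^{'2}$, $\alpha_{4,0} := E_4(\tau_0)/\Omega_{\tau_0}^{'4}$, $\alpha_{6,0} := E_6(\tau_0)/\Omega_{\tau_0}^{'6}$, which by Chowla--Selberg and the analogue of \eqref{eq-trascendental} for $\Omega_{\tau_0}'$ are $p$-integral algebraic numbers. Writing $F(\alpha_4,\alpha_6)$ for the polynomial with $f = F(E_4,E_6)$, the normalized Taylor coefficient becomes
$$\frac{\partial^n f(\tau_0)}{\Omega_{\tau_0}^{'2n+k'}} = \big(\Xi^n F\big)(\beta_0,\alpha_{4,0},\alpha_{6,0}),$$
a $p$-integral algebraic number, and the theorem reduces to the assertion that $\ord_p$ of this value is at least $m$ once $n\ge (m-1)p^2$.

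The decisive input is supersingularity. The hypothesis $\big(\frac{d}{p}\big)\in\{0,-1\}$ says exactly that the elliptic curve with CM by $\tau_0$ has supersingular reduction at $p$ (Deuring), so $(\alpha_{4,0},\alpha_{6,0})$ reduces to a zero of the Hasse invariant $E_{p-1}\bmod p$. I would pass to Katz's $p$-adic modular forms, in which $D=q\,d/dq$ raises the filtration by $p+1$ except at zeros of $E_{p-1}$ (Serre--Swinnerton-Dyer), and interpret $(\Xi^n F)(\beta_0,\alpha_{4,0},\alpha_{6,0})$ as a CM value of a $p$-adic modular form. Reducing the recursion modulo $p$, the supersingular condition should force the reduced flow of $\Xi$ through the supersingular point to be annihilating: one full passage, of length on the order of $p^2$ (the supersingular locus being defined over $\mathbb{F}_{p^2}$), kills the mod-$p$ reduction, giving $\ord_p\ge 1$ for $n\ge p^2$.

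The hard part, and the main obstacle, is to \emph{lift} this from mod $p$ to mod $p^m$ with the precise count $n\ge (m-1)p^2$. I would induct on $m$: once $\ord_p\ge 1$ is known, divide the value by $p$ and repeat the filtration argument, each stage costing a further block of about $p^2$ iterations. Making this rigorous requires a genuinely $p$-adic --- not merely mod-$p$ --- analysis of how $\Xi$ (equivalently, the theta operator) propagates $p$-divisibility near the supersingular point, together with a bound on the $p$-adic valuations of the intermediate coefficients $P_{n,j}(\tau_0)/\Omega_{\tau_0}^{'k'+2n-2j}$ that is uniform in $n$. Equivalently, one must quantify the failure of the Shimura--Maass operator to coincide with $q\,d/dq$ at a supersingular CM point and show that this discrepancy is itself highly $p$-divisible. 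This uniform $p$-adic valuation estimate, rather than the combinatorial expansion of $\partial^n f$, is where the real difficulty lies.
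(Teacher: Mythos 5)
Your frame is the right one, and it matches the actual architecture of the proof: realize $\partial^n f$ as a $\mathbb{Z}_{(p)}$-polynomial in $E_2^\ast$, $E_4$, $E_6$ via a derivation closed on that system (the paper's Lemma \ref{LS-lem-2.7} does exactly this, in the $\Gamma_1(4)$ variables $\Theta, F_2, E_2^\ast$), normalize the CM values by powers of the period to get $p$-integral algebraic numbers, and use Deuring's criterion so that $\big(\tfrac{d}{p}\big)\in\{0,-1\}$ forces the normalized value of $E_{p-1}$ at $\tau_0$ to be $\equiv 0 \pmod p$ (Lemma \ref{LS-lem-5.1}). But everything you label ``the hard part'' \emph{is} the theorem. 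The needed quantitative statement is that for $n\ge (m-1)p^2$ the polynomial attached to $D^n f$ lies in the $m$-th power of the ideal $\langle A_p^p,p\rangle$ --- in the paper's language, $\nu_p(D^n f)\ge m$ --- after which the evaluation step is immediate: each factor $A_p^p$ contributes $p$-valuation at least $p$ at the supersingular point, and the prefactors $p^{m-i}$ supply the rest. Your proposal defers precisely this ideal-membership estimate, so it has a genuine gap at its center.

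Moreover, the two mechanisms you sketch for closing the gap do not work as stated. The mod-$p$ ``flow of $\Xi$ through the supersingular point, one passage of length $\sim p^2$'' is a heuristic with no argument behind it; the true source of the exponent $p^2$ is arithmetic, not dynamical. Fermat's little theorem applied to exponents of $q$ gives the $q$-expansion congruence $D^{p^2}f\equiv D^p f \pmod{p^2}$; Zagier's closed formula (Proposition \ref{Zag-prop}) shows the nonmodular $E_2$-tails of $D^p f$ and $D^{p^2}f$ are divisible by $p$ and $p^2$ respectively (equations \eqref{Zag-eq-1} and \eqref{Zag-eq-2}); and then, since the modular parts in question live in weights $k+2p$ and $k+2p^2$, the mod $p^m$ filtration theorem of Serre--Katz type (Theorem \ref{Kob-Lan}) converts the weight drop into divisibility by $\overline{\cA}_p^{\,2p}$ in $(\mathbb{Z}/p^2\mathbb{Z})[X,Y]$, yielding $\nu_p(D^{p^2}f)\ge 2$ (Lemma \ref{LS-prop-4.3}). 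Second, your induction ``divide the value by $p$ and repeat the filtration argument'' has no object to run on: once you divide the CM value by $p$ you no longer possess the value of a modular form, so no filtration statement applies to it. The correct induction is carried out at the level of the polynomial ring $\mathbb{Z}_{(p)}[X,Y,Z]$, never at the level of CM values: one proves $\nu_p(D^2 E_{p-1})\ge 1$ and $\nu_p(D^{p^2}E_{p-1}^p)\ge 3$ (Lemma \ref{LS-prop-4.4}), bootstraps to $\nu_p(D^{p^2}f)\ge\max\{\nu_p(f)+1,2\}$ (Lemma \ref{LS-prop-4.5}), and then propagates through Zagier's expansion of $D^{np^2}f$ using $p$-adic valuations of the binomial coefficients ${np^2\choose i}$ and of the falling factorials ${{np^2+k-1}\brack{np^2+k-1-i}}$ (Proposition \ref{LS-lem-4.8}, the analogue of \cite[Lemma 4.8]{LS14}). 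Your steps (i)--(iii) correctly reduce the theorem to this chain plus the supersingular evaluation, but the chain itself --- the real content --- is missing, and the routes you propose toward it would fail.
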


Recently, many authors studied the Taylor coefficients of the classical Jacobi theta
$$\theta_3(z):=\sum_{n=-\infty}^{\infty}e^{\pi i n^2 z}$$
around $\tau_0=i$ (we refer to \cite{GMR20,Rom20,Sch21,Wak20,WV20}). Romik defined $(d(n))_{n=0}^{\infty}$ to be the sequence such that 
\begin{equation}\label{Romik-seq}
(1-w)^{-1/2}\theta_3\left(
\frac{i+wi}{1-w}\right)
=\theta_3(i)\sum_{n=0}^{\infty}\frac{d(n)}{(2n)!}
\left(\frac{\gamma(1/4)^4}{8\pi^2 \sqrt{2}}w\right)^{2n},\quad |w|<1,
\end{equation}
and showed that $d(n)$'s are integers \cite[Theorems 1 and 2]{Rom20}. 
Comparing \eqref{TS-formula} with \eqref{Romik-seq},
we note that $\partial^n \theta_3(i)=0$ for odd $n$; this is because for $z=z(w)=(i+wi)/(1-w)$, we have that 
$\theta_3(z(-w))=\theta_3(-1/z)=\sqrt{z/i}\theta_3(z)
=((1+w)/(1-w))^{1/2}\theta_3(z(w))$.
Romik also made the following conjecture.
\begin{conj}\label{Rom-conj}{\rm \cite[Conjecture 13(b) and Open problem 2]{Rom20}}
Let $p$ be an odd prime. Then we have:
\begin{enumerate}[(a)]
\item\label{conj-a} If $p\equiv3$ $(\text{mod }4)$, then $d(n)\equiv 0$ $(\text{mod }p)$ for sufficiently large $n$. 
\item\label{conj-b} If $p\equiv1$ $(\text{mod }4)$, the sequence 
$\{d(n)\,\,(\text{mod }p)\}_{n=0}^{\infty}$ is periodic.
\item\label{conj-c} If $p\equiv3$ $(\text{mod }4)$ and $m\ge2$, then $d(n)\equiv 0$ $(\text{mod }p^m)$ for sufficiently large $n$. 
\item\label{conj-d} If $p\equiv1$ $(\text{mod }4)$ and $m\ge2$, the sequence 
$\{d(n)\,\,(\text{mod }p^m)\}_{n=0}^{\infty}$ is periodic.
\end{enumerate}
\end{conj}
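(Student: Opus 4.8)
The plan is to reduce Conjecture \ref{Rom-conj} to a single statement about the $p$-adic behaviour of the normalized Shimura--Maass derivatives $\partial^{2n}\theta_3(i)/\Omega_{\tau_0}^{4n+1/2}$, and then to split the analysis according to the splitting type of $p$ in $K=\mathbb{Q}(i)$. First I would compare the two Taylor expansions \eqref{TS-formula} and \eqref{Romik-seq}. Specializing \eqref{TS-formula} to $f=\theta_3$, $\tau_0=i$ (so $y_0=1$, $k=\tfrac12$, $\overline{\tau}_0=-i$) and matching the coefficient of $w^{2n}$ against \eqref{Romik-seq}, the common factor $(2n)!$ cancels; after expressing $\gamma(1/4)^4$ through the Chowla--Selberg period $\Omega_K=\gamma(1/4)^2/(4\pi^{3/2})$ of $\mathbb{Q}(i)$, one obtains an identity of the shape
\begin{equation*}
d(n)=U^{\,n}\,V\cdot\frac{\partial^{2n}\theta_3(i)}{\Omega_{\tau_0}^{4n+1/2}},
\end{equation*}
where $U,V\in\overline{\mathbb{Q}}^{\times}$ are fixed. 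Under the embedding $\overline{\mathbb{Q}}\hookrightarrow\overline{\mathbb{Q}}_p$ of \eqref{eq-trascendental} one checks that $U,V\in\overline{\mathbb{Z}}_p^{\times}$: the normalization $d(0)=1$ forces both $V$ and $\theta_3(i)/\Omega_{\tau_0}^{1/2}$ to be units, and the choice of $\Omega_{\tau_0}$ makes $U$ a unit. Since $n\mapsto U^{\,n}$ is purely periodic modulo $p^m$ with period dividing $\phi(p^m)=p^{m-1}(p-1)$, multiplication by $U^{\,n}V$ preserves both the property ``$\equiv0\pmod{p^m}$ for all large $n$'' and the property ``eventually periodic modulo $p^m$.'' Hence all four parts of Conjecture \ref{Rom-conj} follow once the corresponding assertions are proved for $\partial^{2n}\theta_3(i)/\Omega_{\tau_0}^{4n+1/2}$, and by \eqref{eq-trascendental} these values are $p$-integral, so the statements are well posed modulo $p^m$.

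Next I would give the normalized derivatives an algebraic interpretation through Katz's theory of $p$-adic modular forms, adapted to half-integral weight on $\Gamma_1(4)$. The point $\tau_0=i$ is a CM point for $K=\mathbb{Q}(i)$ of discriminant $d=-4$, and $\big(\tfrac{d}{p}\big)=\big(\tfrac{-1}{p}\big)$ equals $-1$ when $p\equiv3\pmod4$ (so $p$ is inert and the associated CM curve has supersingular reduction) and $+1$ when $p\equiv1\pmod4$ (so $p$ splits and the reduction is ordinary). On $q$-expansions the Shimura--Maass operator, once the transcendental period is divided out, is realized by the Serre--Ramanujan theta operator $\theta:=q\,d/dq$. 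The key input is a comparison
\begin{equation*}
\frac{\partial^{2n}\theta_3(i)}{\Omega_{\tau_0}^{4n+1/2}}\equiv W\cdot\widehat{\big(\theta^{2n}\theta_3\big)}(\tau_0)\pmod{p^m},
\end{equation*}
where $W\in\overline{\mathbb{Z}}_p^{\times}$ and $\widehat{(\cdot)}(\tau_0)$ is evaluation of a $p$-adic modular form at the CM test object attached to $\tau_0$ (an elliptic curve with CM by $\mathcal{O}_K$, equipped with the canonical trivialization of its formal group in the ordinary case). Establishing this comparison in half-integral weight---constructing the requisite half-integral-weight $p$-adic modular forms on $\Gamma_1(4)$, matching $\partial$ with $\theta$, and identifying the $p$-adic CM period against $\Omega_K$---is where the generalization of Theorem \ref{LS-thm} beyond integral weight on $\Gamma(1)$ takes place, and I expect it to be the main obstacle.

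Granting the comparison, the two cases are governed by the dichotomy between the supersingular and ordinary loci. For $p\equiv3\pmod4$ (parts (a) and (c)) the CM point lies on the supersingular locus, where the Hasse invariant $A$ (the reduction of $E_{p-1}$) vanishes. The theta operator raises the Serre filtration by $p+1$ at each step, so each near-passage to the supersingular locus forces an extra factor of $p$ into $\widehat{(\theta^{2n}\theta_3)}(\tau_0)$; consequently $\ord_p$ of the normalized derivative grows linearly in $n$. In particular it exceeds any fixed $m$ once $n$ is large, yielding $d(n)\equiv0\pmod{p^m}$ for all sufficiently large $n$. For $m=1$ this is part (a), and for $m\ge2$ the explicit filtration count of \cite{LS14} gives the threshold $2n\ge(m-1)p^2$, which is part (c); this is the half-integral-weight analogue of Theorem \ref{LS-thm}.

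For $p\equiv1\pmod4$ (parts (b) and (d)) the CM point is ordinary, $A$ is a $p$-adic unit there, and no such vanishing occurs; instead the evaluation linearizes. Using the unit-root (Serre--Tate) splitting at the ordinary CM point, the operator $\theta$ acts invertibly on the relevant ordinary data, so that
\begin{equation*}
\widehat{\big(\theta^{2n}\theta_3\big)}(\tau_0)\equiv\sum_{j}c_j\,\alpha_j^{\,2n}\pmod{p^m},
\end{equation*}
for finitely many fixed $c_j\in\overline{\mathbb{Z}}_p$ and unit-root eigenvalues $\alpha_j$, together with transient terms arising from the non-ordinary direction that vanish modulo $p^m$ once $2n\ge m$. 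Because $p$ splits in $\mathbb{Q}(i)$ and $h(\mathbb{Q}(i))=1$, the relevant values lie in $\mathbb{Q}_p$, so each $\alpha_j\in\mathbb{Z}_p^{\times}$ and each $\alpha_j^{\,2n}$ is purely periodic in $n$ modulo $p^m$ with period dividing $\phi(p^m)$; hence so is the finite sum. Combined with the first paragraph, this shows that $\{d(n)\ (\mathrm{mod}\ p^m)\}_{n\ge0}$ is eventually periodic, giving part (b) for $m=1$ and part (d) for $m\ge2$ (and purely periodic when the transient terms are absent). The one delicate point here, beyond the comparison above, is justifying the linearization---that evaluation at the ordinary CM point converts $\theta$ into multiplication by fixed unit-root data---which I would deduce from the Serre--Tate coordinate together with the $q$-expansion principle for half-integral-weight $p$-adic modular forms on $\Gamma_1(4)$.
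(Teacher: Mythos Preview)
First, note that the paper does not itself prove all of Conjecture~\ref{Rom-conj}: it attributes part~(a) to Scherer and parts~(b),~(d) to Guerzhoy--Mertens--Rolen, and its own contribution is part~(c) for $p\ge5$ (Theorem~\ref{cor-thm-0}, via Theorem~\ref{main-thm-0}). So the relevant comparison is for part~(c).

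Your route to~(c) is genuinely different from the paper's, and as written it has a real gap. You propose to pass through Katz's theory of $p$-adic modular forms, establish a comparison $\partial^{2n}\theta_3(i)/\Omega_{\tau_0}^{4n+1/2}\equiv W\cdot\widehat{(\theta^{2n}\theta_3)}(\tau_0)$, and then argue that vanishing of the Hasse invariant at the supersingular CM point forces factors of~$p$. But you yourself flag the half-integral-weight comparison as ``the main obstacle'' and never supply it, and the supersingular step (``each near-passage to the supersingular locus forces an extra factor of~$p$'') is a slogan, not an argument. Citing \cite{LS14} for the threshold $2n\ge(m-1)p^2$ does not close the gap, since \cite{LS14} treats even weight on~$\Gamma(1)$ and extending it to half-integral weight on~$\Gamma_1(4)$ is precisely the content of the paper.

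The paper bypasses the Katz machinery entirely. It works in the explicit ring $\mathbb{Z}_{(p)}[\Theta,F_2,E_2]$, defines a quasi-valuation~$\nu_p$ via the ideal $\langle\mathcal{A}_p^p,p\rangle$ with $\mathcal{A}_p=\mathcal{G}(E_{p-1};X,Y)$, and proves $\nu_p(D^{mp^2}f)\ge m+1$ (Proposition~\ref{LS-lem-4.8}) by induction using Zagier's closed formula for $D^nf$ (Proposition~\ref{Zag-prop}) in place of Rankin--Cohen brackets. The one new ingredient over \cite{LS14} is Theorem~\ref{Kob-Lan}, the half-integral-weight divisibility criterion on~$\Gamma_1(4)$, proved by multiplying by~$\Theta$ to reduce to integral weight and checking $X\nmid\overline{\mathcal{A}}_p$. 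Then Lemma~\ref{LS-lem-2.7} gives $\partial^nf(\tau_0)=\mathcal{G}(D^nf;\Theta(\tau_0),F_2(\tau_0),E_2^\ast(\tau_0))$, and since $E_{p-1}(\tau_0)/\Omega_{\tau_0}^{p-1}\equiv0\pmod p$ at an inert CM point (Lemma~\ref{LS-lem-5.1}), every term of $\sum_i p^{m-i}\mathcal{A}_p^{ip}\mathcal{G}_i$ is a multiple of~$p^m$. This is concrete and self-contained; your approach could in principle succeed, but it requires a half-integral-weight $p$-adic theory and a $p$-adic period comparison that you do not provide.
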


Part \eqref{conj-a} is proved by Scherer \cite[Theorem 1]{Sch21}: Scherer showed that $d(n)\equiv 0$ $(\text{mod }p)$ for $p\equiv3$ $(\text{mod }4)$ and $n\ge (p^2+1)/2$.
Both parts \eqref{conj-b} and \eqref{conj-d} are shown by Guerzhoy et al. \cite{GMR20}. 
Guerzhoy et al. also generalized \eqref{conj-d} to a broader class: $f\in M_k(\overline{\ZZ},\Gamma_1(4N))$ ($k\in\ZZ$ or $\frac{1}{2}+\ZZ$, $N\in\NN$), CM points $\tau_0\in K$, and splitting primes $p$ in $\mathcal{O}_K$ (see \cite[Theorem 1.2]{GMR20}). 

In the following Theorem \ref{main-thm-0}, we generalize Theorem \ref{LS-thm} to the space of modular forms of half-integral weight on $\Gamma_1(4)$. Applying Theorem \ref{main-thm-0} to Conjecture \ref{Rom-conj}\eqref{conj-c} for $p\ge 5$, we prove that
Conjecture \ref{Rom-conj}\eqref{conj-c} 
holds except for $p=3$; this is stated in Theorem \ref{cor-thm-0}.

\begin{thm}\label{main-thm-0}
Let $p\ge 5$ be a prime, $m\in\NN$ and $2k\in\ZZ$. 
Let $f\in M_{k}(\ZZ,\Gamma_1(4))$, and let $\tau_0$ be a CM point in $\QQ(\sqrt{d})$ of discriminant $d<0$. 
If $p$ satisfies $\big(\frac{d}{p}\big)
\in\{0,-1\}$, then we have
$$\frac{(\partial^n f)(\tau_0)}{\Omega_{\tau_0}^{2n+k}}
\equiv 0 \quad (\text{mod } p^m),$$
where $m\ge 2$, $n\ge (m-1) p^2$, and $\Omega_{\tau_0}$ satisfies \eqref{eq-trascendental}.
\end{thm}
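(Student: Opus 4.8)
The plan is to pass from the archimedean, non-holomorphic picture of \eqref{TS-formula} to the $p$-adic theory of modular forms and to reduce the asserted vanishing to a local estimate at a \emph{supersingular} point; the integral-weight theorem of Larson and Smith (Theorem~\ref{LS-thm}) is the template, and the genuinely new work is to run the argument in half-integral weight on $\Gamma_1(4)$ with the period normalization \eqref{eq-trascendental}. First I would invoke Katz's interpretation of the Shimura--Maass operator at CM points: the Hodge filtration of the de Rham cohomology of the associated CM elliptic curve $E/\overline{\mathbb{Q}}$ admits a canonical \emph{algebraic} splitting coming from the $\mathcal{O}_K$-action, and under this splitting $\partial=\partial_k$ is identified, after dividing by the period $\Omega_{\tau_0}$, with an algebraic weight-raising derivation $\Theta$. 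Consequently $\partial^n f(\tau_0)/\Omega_{\tau_0}^{2n+k}$ is an algebraic number, equal to the value at $E$ of $\Theta^n$ applied to the algebraic modular form attached to $f$, and \eqref{eq-trascendental} guarantees that this value is $p$-integral. The half-integral weight is accommodated by realizing the weight-$\frac12$ bundle on $X_1(4)$ through $\theta_3$, so that $\Theta$ and its iterates make sense metaplectically and preserve $p$-integrality of the relevant $q$-expansions.

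Next I would fix an embedding $\overline{\mathbb{Q}}\hookrightarrow\overline{\mathbb{Q}}_p$ and reduce. The hypothesis $\left(\frac{d}{p}\right)\in\{0,-1\}$ says precisely that $p$ is non-split in $K=\mathbb{Q}(\sqrt d)$, so $E$ has \emph{supersingular} reduction $\overline E/\overline{\mathbb{F}}_p$ and $\tau_0$ reduces to a supersingular point $P$, i.e.\ a zero of the Hasse invariant $A=E_{p-1}\bmod p$. The theorem is thereby reduced to the following local estimate for the $p$-integral object $g$ attached to $f$:
$$\ord_p\big((\Theta^{n} g)(P)\big)\ \ge\ \Big\lfloor \tfrac{n}{p^{2}}\Big\rfloor+1 \qquad (n\ge p^{2}),$$
which yields $(\Theta^n g)(P)\equiv 0 \pmod{p^m}$ as soon as $n\ge (m-1)p^2$ with $m\ge 2$, and hence the stated congruence after transporting back through the $p$-adic unit transition factors supplied by \eqref{eq-trascendental}.

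The arithmetic heart is the exponent $p^2$, dictated by the height-$2$ geometry of the supersingular formal group. Comparing $\Theta$ with Frobenius through the $p$-curvature of the Gauss--Manin connection on $\overline E$, one block of $p$ applications of $\Theta$ produces a factor of the Hasse invariant $A$, and a second block produces its Frobenius twist $A^{(p)}$; in the supersingular case $\overline E$ has $\phi^2=-p$ (vanishing trace of Frobenius), which manifests as a relation expressing $p$, up to a unit, as $A\cdot A^{(p)}$ on the relevant rank-two crystal. Thus a full block of $p^2$ applications of $\Theta$ -- two Frobenius steps -- combines the two mod-$p$ vanishings $A(P)$ and $A^{(p)}(P)$ into one honest factor of $p$, whereas fewer than $p^2$ applications cannot. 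Iterating this block decomposition $m-1$ times, together with the additional power of $p$ gained at the first crossing of the supersingular disc, is what converts $n\ge(m-1)p^2$ into divisibility by $p^m$.

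The step I expect to be the main obstacle is this supersingular estimate in \emph{half-integral} weight, and in particular pinning down the sharp constant. Naively, a clean block count gives only $p^{m-1}$, so one must extract the extra power of $p$ that accompanies entry into the supersingular residue disc and verify that the per-block gain persists with no loss modulo higher powers of $p$; this is exactly the delicate bookkeeping that upgrades the Larson--Smith mechanism to the bound $(m-1)p^2$. Setting up the $p$-curvature and Hasse-invariant formalism for the weight-$\frac12$ bundle on $\Gamma_1(4)$, and constructing $\Omega_{\tau_0}$ uniformly across all weights $2k\in\mathbb{Z}$ so that \eqref{eq-trascendental} holds, are the accompanying technical points that make the entire reduction to values of $\Theta^n g$ at $P$ legitimately $p$-integral.
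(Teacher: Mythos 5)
Your high-level frame---non-split $p$ forces supersingular reduction, the Hasse invariant vanishes at the reduction of $\tau_0$, and one gains roughly one power of $p$ per block of $p^2$ derivatives---is indeed the mechanism behind Theorem \ref{LS-thm} and behind this paper. But your proposal defers precisely the load-bearing step. The local estimate you reduce to, $\ord_p\big((\Theta^{n}g)(P)\big)\ge\lfloor n/p^2\rfloor+1$, \emph{is} the theorem up to routine evaluation, and in the paper it occupies all of Sections \ref{sec-3}--\ref{sec-4}: the mod $p^m$ filtration criterion (Theorem \ref{Kob-Lan}), the bookkeeping via Zagier's formula (Proposition \ref{Zag-prop} and \eqref{Zag-eq-1}, \eqref{Zag-eq-2}), the base estimates $\nu_p(D^{p^2}f)\ge 2$, $\nu_p(D^{2}E_{p-1})\ge 1$, $\nu_p(D^{p^2}E_{p-1}^p)\ge 3$ and $\nu_p(D^{p^2}f)\ge\max\{\nu_p(f)+1,2\}$ (Lemmas \ref{LS-prop-4.3}, \ref{LS-prop-4.4}, \ref{LS-prop-4.5}), and the induction giving $\nu_p(D^{mp^2}f)\ge m+1$ (Proposition \ref{LS-lem-4.8}). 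You yourself concede that a naive block count yields only $p^{m-1}$ and call the sharp constant ``the main obstacle''; the crystalline heuristic offered in its place---$p$-curvature of Gauss--Manin, $\phi^2=-p$, and ``$p\sim A\cdot A^{(p)}$ up to a unit''---is not a proof and does not obviously survive modulo $p^m$ for $m\ge 2$: nothing in it controls error terms beyond the first power of $p$, which is exactly where the paper needs the two-fold application of Theorem \ref{Kob-Lan} modulo $p^2$ (see \eqref{Kob-Lan-eq-1}) together with the congruences $E_{p-1}\equiv 1$ and $E_{p+1}\equiv E_2 \pmod{p\,\mathbb{Z}_{(p)}[[q]]}$.

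A second genuine gap is the half-integral-weight foundation. There is no off-the-shelf Katz-style theory (canonical splitting of the Hodge filtration, crystals, an algebraic theta operator) for a weight-$\frac{1}{2}$ ``bundle'' on $X_1(4)$; ``realizing it metaplectically via $\theta_3$'' names the problem rather than solving it (and your theta operator collides notationally with the paper's form $\Theta$). The paper never needs such machinery: Lemma \ref{p-integral-coefficients} writes $f$ as an isobaric polynomial in $\Theta$ and $F_2$ over $\mathbb{Z}[\frac12,\frac13]$, so everything becomes commutative algebra in $\mathbb{Z}_{(p)}[X,Y,Z]$; the half-integral filtration statement is reduced to Katz's integral-weight result by multiplying by $\Theta$ and checking $X\nmid\overline{\cA}_p(X,Y)$ (proof of Theorem \ref{Kob-Lan}); and the archimedean-to-algebraic passage is the elementary substitution $E_2\mapsto E_2^\ast$ (Lemma \ref{LS-lem-2.7}) plus an explicit choice of $\Omega_{\tau_0}$ making $\Theta(\tau_0)/\Omega_{\tau_0}^{1/2}$, $F_2(\tau_0)/\Omega_{\tau_0}^{2}$, $E_2^\ast(\tau_0)/\Omega_{\tau_0}^{2}$ simultaneously $p$-integral, which is what makes \eqref{eq-trascendental} available rather than an assumption. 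The only supersingular input is then the single congruence $c_n(E_{p-1},\tau_0,\Omega_{\tau_0})\equiv 0\pmod p$ quoted from \cite{LS14} (Lemma \ref{LS-lem-5.1}), after which $\nu_p(D^nf)\ge m$ is unwound term by term in the expansion $\cG(D^nf;X,Y,Z)=\sum_{0\le i\le m}p^{m-i}\cA_p^{ip}\cG_i$. As written, your proposal is a plausible roadmap whose two essential steps---the sharp quantitative estimate and its half-integral-weight justification---are asserted, not established.
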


For $p\ge5$, Conjecture \ref{Rom-conj}\eqref{conj-c} follows from Theorem \ref{main-thm-0} by taking that
$f(z)=\sum_{n\in\ZZ}q^{n^2}\in M_{1/2}(\ZZ,\Gamma_1(4))$ and $\tau_0=i/2\in \QQ(\sqrt{-4})$.
\begin{thm}\label{cor-thm-0}
Let $p\ge5$ be a prime and $m\in\NN$.
If $p\equiv 3$ $(\text{mod }4)$, $m\ge 2$ and $n\ge \lceil(m-1) p^2/2\rceil$, then we have $d(n)\equiv 0$ $(\text{mod }p^m)$, where $d(n)$ is defined in \eqref{Romik-seq}.
\end{thm}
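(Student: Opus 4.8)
The plan is to derive Theorem \ref{cor-thm-0} from Theorem \ref{main-thm-0} using the data indicated after the statement of Theorem \ref{main-thm-0}: the weight-$\tfrac12$ theta series $f(z)=\sum_{n\in\mathbb{Z}}q^{n^2}\in M_{1/2}(\mathbb{Z},\Gamma_1(4))$, the CM point $\tau_0=i/2\in K:=\mathbb{Q}(i)=\mathbb{Q}(\sqrt{-4})$ of discriminant $d=-4$, and $k=\tfrac12$. First I would record $f(z)=\theta_3(2z)$, immediate from $q=e^{2\pi i z}$; since $z\mapsto2z$ carries $\tau_0=i/2$ to $i$ and $\overline{\tau}_0=-i/2$, the argument $(\tau_0-\overline{\tau}_0w)/(1-w)$ of \eqref{TS-formula} becomes exactly $i(1+w)/(1-w)$, matching the argument of $\theta_3$ in \eqref{Romik-seq}. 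I would then check the Legendre-symbol hypothesis: for odd $p$ one has $\big(\tfrac{-4}{p}\big)=\big(\tfrac{-1}{p}\big)$, which is $-1$ exactly when $p\equiv3\ (\mathrm{mod}\ 4)$, so the condition $\big(\tfrac{d}{p}\big)\in\{0,-1\}$ of Theorem \ref{main-thm-0} holds.

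Next I would match the two Taylor expansions coefficient by coefficient. With $y_0=\tfrac12$, \eqref{TS-formula} specializes to $(1-w)^{-1/2}\theta_3\big(i(1+w)/(1-w)\big)=\sum_{n\ge0}\partial^nf(\tau_0)(-2\pi w)^n/n!$; since the odd coefficients vanish on both sides, comparing the coefficient of $w^{2n}$ with \eqref{Romik-seq} yields the exact identity $d(n)=\big(16\pi^3\sqrt2/\gamma(1/4)^4\big)^{2n}\,\partial^{2n}f(\tau_0)/\theta_3(i)$. To convert the transcendental constant into the normalization of Theorem \ref{main-thm-0}, I would insert the Chowla--Selberg value of $\Omega_K$ for $K=\mathbb{Q}(i)$, where $h(-4)=1$, $w_{-4}=4$, and $\chi_{-4}$ is the nontrivial character modulo $4$: the reflection formula $\gamma(1/4)\gamma(3/4)=\pi\sqrt2$ gives $\Omega_K=\gamma(1/4)^2/(4\pi^{3/2})$, while the classical value $\theta_3(i)=\pi^{1/4}/\gamma(3/4)$ gives $\theta_3(i)=\sqrt2\,\Omega_K^{1/2}$. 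Substituting both collapses all powers of $\pi$ and $\gamma(1/4)$ and leaves the clean identity
$$d(n)=2^{\,n-\frac12}\,\frac{\partial^{2n}f(\tau_0)}{\Omega_K^{\,4n+\frac12}}.$$

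Finally I would pass to $p$-adic valuations under the fixed embedding defining $\ord_p$. Since $p\ge5$ is odd, $2^{\,n-1/2}$ is a $p$-adic unit, so $\ord_p(d(n))=\ord_p\big(\partial^{2n}f(\tau_0)/\Omega_K^{4n+1/2}\big)$; combined with Romik's integrality $d(n)\in\mathbb{Z}$ this shows $\ord_p\big(\partial^{2n}f(\tau_0)/\Omega_K^{4n+1/2}\big)\ge0$ for every $n$, so $\Omega_K$ is $p$-integrally admissible for this form and may serve as the normalizer $\Omega_{\tau_0}$ of Theorem \ref{main-thm-0}. Applying that theorem with derivative order $2n$ then gives $\ord_p\big(\partial^{2n}f(\tau_0)/\Omega_K^{4n+1/2}\big)\ge m$ as soon as $2n\ge(m-1)p^2$, i.e.\ $n\ge\lceil(m-1)p^2/2\rceil$, whence $d(n)\equiv0\ (\mathrm{mod}\ p^m)$ in that range. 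The step I expect to be the main obstacle is exactly this identification of normalizers: one must verify that the admissible period $\Omega_{\tau_0}$ of \eqref{eq-trascendental} and the Chowla--Selberg period $\Omega_K$ differ by a factor that is a $p$-adic unit uniformly in $n$, since otherwise $(\Omega_{\tau_0}/\Omega_K)^{4n+1/2}$ could erode the gained power of $p$. For $\tau_0=i/2$ this comparison factor is governed by the index $2$ of the lattice $\mathbb{Z}+\mathbb{Z}(i/2)$ in $\mathcal{O}_K$, a unit at every $p\ge5$; the displayed identity together with $d(n)\in\mathbb{Z}$ is what makes this transfer rigorous.
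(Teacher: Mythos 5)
Your overall route is the paper's route: apply Theorem \ref{main-thm-0} to $\Theta(z)=\theta_3(2z)\in M_{1/2}(\mathbb{Z},\Gamma_1(4))$ at $\tau_0=i/2$ with $d=-4$ (and $\big(\tfrac{-4}{p}\big)=\big(\tfrac{-1}{p}\big)=-1$ for $p\equiv3\ (\mathrm{mod}\ 4)$), match \eqref{TS-formula} against \eqref{Romik-seq}, and convert $2n\ge(m-1)p^2$ into $n\ge\lceil(m-1)p^2/2\rceil$. Your identity $d(n)=2^{n-1/2}\,\partial^{2n}\Theta(\tau_0)/\Omega_K^{4n+1/2}$ is correct and consistent with the paper's $d(n)=2^{-5/8}\,\partial^{2n}\Theta(\tau_0)/\Omega_{\tau_0}^{4n+1/2}$, since the paper takes $\Omega_{\tau_0}=2^{-1/4}\Omega_K$.

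However, there is a genuine gap at the step where you promote $\Omega_K$ to the normalizer of Theorem \ref{main-thm-0}. That theorem requires $\Omega_{\tau_0}$ to satisfy \eqref{eq-trascendental}, a condition quantified over \emph{all} $f\in M_k(\mathbb{Z},\Gamma_1(4))$, all weights $2k\in\mathbb{Z}$, and all $n$ --- not just over the single form $\Theta$. Your inference ``$d(n)\in\mathbb{Z}$ and $2^{n-1/2}$ is a $p$-unit, hence $\ord_p\big(\partial^{2n}\Theta(\tau_0)/\Omega_K^{4n+1/2}\big)\ge0$, hence $\Omega_K$ may serve as $\Omega_{\tau_0}$'' establishes admissibility only for $\Theta$ at even orders, which is both circularly close to the desired conclusion and insufficient for the theorem's mechanism: its proof decomposes $\cG(D^nf;X,Y,Z)=\sum_i p^{m-i}\cA_p^{ip}\cG_i$ and needs the evaluations of the auxiliary pieces $\cG_i$ at $\big(\Theta(\tau_0),F_2(\tau_0),E_2^\ast(\tau_0)\big)$, normalized by powers of $\Omega_{\tau_0}$, to be $p$-integral, and Lemma \ref{LS-lem-5.1} concerns $c_n(E_{p-1},\tau_0,\Omega_{\tau_0})$ --- a different form altogether. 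What must actually be verified (and what the paper verifies) is the $p$-integrality of the three generator ratios $\Theta(\tau_0)/\Omega_{\tau_0}^{1/2}$, $F_2(\tau_0)/\Omega_{\tau_0}^{2}$, $E_2^\ast(\tau_0)/\Omega_{\tau_0}^{2}$, which via Lemmas \ref{p-integral-coefficients} and \ref{LS-lem-2.7} yields \eqref{eq-trascendental}. Your proposal never computes $F_2(i/2)$ or $E_2^\ast(i/2)$, and the closing appeal to the index of the lattice $\mathbb{Z}+\mathbb{Z}(i/2)$ is a heuristic, not a verification. The gap is fixable by exactly the paper's computation: with $a=\gamma(1/4)/(2^{1/2}\pi^{3/4})$, the theta special values give $E_4(i/2)=\tfrac{33}{4}a^8$, hence $F_2(i/2)=a^4/32$ by \eqref{E4-F2}, and $\partial\Theta(i/2)=0$ with \eqref{Theta-E_2} gives $E_2^\ast(i/2)=-\tfrac{3}{2}a^4$; with $\Omega_K=a^2/2$ the three ratios are $2^{1/2}$, $2^{-3}$, and $-6$, all of $\ord_p=0$ for $p\ge5$, so your choice $\Omega_{\tau_0}=\Omega_K$ does satisfy \eqref{eq-trascendental} --- but this needed to be checked directly, not inferred from the integrality of $d(n)$.
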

\smallskip

\section{The algebra of $\Gamma_1(4)$-quasimodular forms}\label{sec-2}

In this section we recall some standard facts about the algebra of modular forms of half-integral weight on $\Gamma_1(4)$ and their derivatives. 

To begin with, we recall definitions of modular forms of integral or half-integral weight. For a function $f:\mathbb{H}\to \CC$ of weight $k\in\ZZ$ and a matrix 
$\gamma={\begin{psmallmatrix}a&b\\c&d\end{psmallmatrix}}\in \mathrm{SL}_2(\ZZ)$, we define the slash operator as
$$(f|_k \gamma)(z):=(cz+d)^{-k}f(\gamma\cdot z), \qquad {\rm where} \; \gamma\cdot z:=\frac{az+b}{cz+d}.$$
In the case of weight $k\in\frac{1}{2}+\ZZ$, we further assume that a matrix
$\gamma={\begin{psmallmatrix}a&b\\c&d\end{psmallmatrix}}\in \Gamma_0(4)$ (i.e., $4\mid c$), and we define the slash operator as
$$(f|_k \gamma)(z):=
\left(\frac{c}{d}\right) \varepsilon_{d}^{2k} (\sqrt{cz+d})^{-2k} 
f(\gamma\cdot z),$$
where we take the branch of the square root having argument in $(-\pi/2,\pi/2]$, $(\tfrac{c}{d})$ is the extended Jacobi symbol (see \cite{Shi73} or \cite[p. 178]{Kob86}), and 
$$\varepsilon_d=
\left\{\begin{array}{ll}
1 &\text{if }d\equiv 1 \,\,(\text{mod }4),\\
i &\text{if }d\equiv 3 \,\,(\text{mod }4).
\end{array}\right.$$
For a level $N\in\NN$ ($4\mid N$ if $k\in\frac{1}{2}+\ZZ$), let $\chi$ be a Dirichlet character modulo $N$. 
A function $f:\mathbb{H}\to \CC$ is called a holomorphic modular form with Nebentypus $\chi$ of weight $k\in\ZZ$ or $\frac{1}{2}+\ZZ$ on $\Gamma_0(N)$ if it is holomorphic on $\mathbb{H}$ and at the cusps of $\Gamma_0(N)$, and if 
$$(f|_k \gamma)(z)=\chi(d)f(z)$$
for all $\gamma \in \Gamma_0(N)$.
For such $f$ whose Fourier coefficients belong to a ring $R$ ($\ZZ\subset R\subset \CC$), we write $f\in M_k(R,\Gamma_0(N),\chi)$, and we simply write $f\in M_k(R,\Gamma_0(N))$ if $\chi=\chi_{\text{triv}}$.
We note that $-I={\begin{psmallmatrix}-1&0\\0&-1\end{psmallmatrix}}\in \Gamma_0(N)$, $-I\cdot z=z$, $(f|_k (-I))(z)=-f(z)$ if $k\in1+2\ZZ$ and $(f|_k (-I))(z)=f(z)$ otherwise.
Therefore, using the fact that $M_k(R,\Gamma_1(4))=M_k(R,\Gamma_0(4))\oplus M_k(R,\Gamma_0(4),\chi_{-4})$,
it follows that
$$M_k(R,\Gamma_1(4))=\left\{
\begin{array}{ll}
M_k(R,\Gamma_0(4)) &\text{if }k\in\tfrac{1}{2}+\ZZ,\\
M_k(R,\Gamma_0(4),\chi_{-4}) &\text{if }k\in 1+2\ZZ,\\
M_k(R,\Gamma_0(4)) &\text{if }k\in 2\ZZ.
\end{array}\right.$$
Then we have that 
$$M_{\ast}(R,\Gamma_1(4)):=\bigoplus_{j=0}^{\infty}M_{j/2}(R,\Gamma_1(4))$$ 
is the graded $R$-algebra.

For $j\ge 1$ and $n\in\NN$, let $\sigma_j(n):=\sum_{0<d|n}d^j$. 
Let $\Theta$ and $F_2$ be the classical forms 
$$\Theta:=\sum_{n\in\ZZ}q^{n^2}\in M_{1/2}(\ZZ,\Gamma_1(4))
\quad \text{ and } \quad 
F_2:=\sum_{n \text{ odd }\ge1} \sigma_1(n)q^n
\in M_{2}(\ZZ,\Gamma_1(4)).$$
It is well-known that 
$$M_{\ast}(\CC,\Gamma_1(4))=\CC[\Theta,F_2].$$ 
Furthermore, the next lemma follows by the same proof as in \cite[p. 184]{Kob93}.

\begin{lem}\label{p-integral-coefficients}
Let $R$ be a ring such that $\ZZ\subset R \subset \CC$ and $R_{(6)}:=R[\frac{1}{2},\frac{1}{3}]$.
Let $k\in\ZZ$ or $\frac{1}{2}+\ZZ$ and $f\in M_k(R,\Gamma_1(4))$. Then there exist elements $c_{a,b}\in R_{(6)}$ such that  
$$f=\sum_{\substack{0\le a,b\in\ZZ, \\a/2+2b=k}}c_{a,b}\Theta^a F_2^{b}.$$
\end{lem}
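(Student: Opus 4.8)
The plan is to exploit the \emph{triangular} (echelon) shape of the $q$-expansions of the monomials $\Theta^a F_2^b$ at the cusp $\infty$ and to recover the coefficients $c_{a,b}$ by a term-by-term comparison. First I would record the two basic expansions $\Theta = 1 + 2q + 2q^4 + \cdots$ and $F_2 = q + 4q^3 + \cdots$: both have integer Fourier coefficients, $\Theta$ has constant term $1$, and $F_2$ vanishes to order exactly $1$ at $\infty$. Consequently, for any pair $(a,b)$ with $a/2 + 2b = k$, the monomial $\Theta^a F_2^b$ has the form $q^b + O(q^{b+1})$, so its leading coefficient is the unit $1$.

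For a fixed weight $k$ the admissible pairs are $a = 2k - 4b$ with $b = 0, 1, \ldots, \lfloor k/2\rfloor$, and the associated monomials have pairwise distinct orders of vanishing $0, 1, \ldots, \lfloor k/2\rfloor$ at $\infty$; hence they are linearly independent. Since $M_{\ast}(\mathbb{C},\Gamma_1(4)) = \mathbb{C}[\Theta,F_2]$ with $\Theta,F_2$ algebraically independent, these $\lfloor k/2\rfloor + 1$ monomials form a $\mathbb{C}$-basis of $M_k(\mathbb{C},\Gamma_1(4))$. Given $f = \sum_n a_n q^n \in M_k(R,\Gamma_1(4))$, I would then solve for the $c_{a,b}$ inductively in $b$: the only monomial contributing to $q^0$ is $\Theta^{2k}$, forcing $c_{2k,0} = a_0 \in R$; subtracting $c_{2k,0}\Theta^{2k}$ (which has coefficients in $R$, as $\Theta$ has an integral expansion) produces a form in $M_k(R,\Gamma_1(4))$ vanishing to order $\ge 1$, whose $q^1$-coefficient must equal $c_{2k-4,1}\in R$, and so on. Because at each step one divides only by the leading unit $1$, no denominators are introduced and every $c_{a,b}$ in fact lands in $R \subseteq R_{(6)}$.

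To close the argument I would check that this procedure reconstructs $f$ exactly: after subtracting all $\lfloor k/2\rfloor + 1$ monomials, the remainder lies in $M_k(\mathbb{C},\Gamma_1(4))$ and vanishes at $\infty$ to order strictly larger than $\lfloor k/2\rfloor = \dim_{\mathbb{C}} M_k(\mathbb{C},\Gamma_1(4)) - 1$; since the monomials already span this space, the remainder is forced to be $0$. This yields $f = \sum_{a/2+2b=k} c_{a,b}\Theta^a F_2^b$ with $c_{a,b}\in R_{(6)}$, as required.

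The triangular solve itself is purely formal, so I expect the only real content to be the structural input $M_{\ast}(\mathbb{C},\Gamma_1(4)) = \mathbb{C}[\Theta,F_2]$ and the matching dimension count $\dim_{\mathbb{C}} M_k = \lfloor k/2\rfloor + 1$, which is what makes the monomials a basis and forces the remainder to vanish. The hard part is therefore not the present lemma but this background fact, which rests on controlling the orders of vanishing at \emph{all} three cusps of $\Gamma_0(4)$ through the valence formula; this is exactly the computation carried out in \cite[p.~184]{Kob93}. Passing to $R_{(6)} = R[\tfrac12,\tfrac13]$ is what guarantees a clean integral structure for the ring of modular forms at the anomalous primes $2$ and $3$, which is why the statement is phrased with $6$ inverted even though the normalization above introduces no denominators.
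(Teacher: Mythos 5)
Your proof is correct and is essentially the argument the paper itself invokes: the paper's entire proof consists of the remark that the lemma ``follows by the same proof as in \cite[p.~184]{Kob93},'' namely the echelon comparison of $q$-expansions of the monomials $\Theta^a F_2^b$ (constant term $1$ for $\Theta$, leading coefficient $1$ for $F_2$) combined with the structure theorem $M_{\ast}(\mathbb{C},\Gamma_1(4))=\mathbb{C}[\Theta,F_2]$, exactly as you do. Your further observation that the unipotent triangular solve introduces no denominators, so that in fact $c_{a,b}\in R$, is a correct mild strengthening --- unlike the $\Gamma(1)$ case with $E_4,E_6$, where dividing by $\Delta=(E_4^3-E_6^2)/1728$ genuinely forces inverting $2$ and $3$, here $F_2$ is monic in $q$, and the paper needs $R_{(6)}$ only for statements such as the derivative formulas \eqref{D-basis}, not for this lemma.
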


For even $k\in\NN$, let $E_k$ be the classical Eisenstein series, defined as 
$$E_k=1-\frac{2k}{B_k}\sum_{n\ge1}\sigma_{k-1}(n)q^n,$$
where $B_k$ is the $k$-th Bernoulli number. 
We note that $E_k\in M_k(\QQ,\Gamma(1))$ for $k\ge 4$, but $E_2$ is not modular. We recall that $D$ is the derivative with respect to $2\pi i z$. The derivative of a modular form of integral or half-integral weight is no longer modular but quasimodular, which means that in the case of $\Gamma_1(4)$ it is an isobaric element of the ring $\CC[\Theta,F_2,E_2]$. The derivative $D$ preserves the ring $R_{(6)}[\Theta,F_2,E_2]$ since we have 
\begin{equation}\label{D-basis}
\left\{
\begin{array}{l} 
D\Theta=(\Theta E_2 -\Theta^5 +80\Theta F_2 )/24,\\
DF_2=(F_2 E_2+5\Theta^4 F_2-16F_2^2)/6,\\
DE_2
=(E_2^2-\Theta^8-224\Theta^4F_2-256F_2^2)/12.
\end{array}
\right.\end{equation}

To any $g\in\CC[\Theta,F_2,E_2]$, 
we attach a polynomial $\cG(g; X,Y,Z)$ such that 
$$g(z)=\cG(g;\Theta(z),F_2(z),E_2(z)).$$
We denote by $g_0$ the {\it modular part} of $g$, that is, $g_0(z):=\cG(g;\Theta(z),F_2(z),0)$. When $g=g_0$, we simply write $\cG(g;X,Y)\in \CC[X,Y]$ instead of $\cG(g;X,Y,Z)\in \CC[X,Y,Z]$.

For modular forms of even integral weight on $\Gamma(1)$, it is well-known that 
$$\bigoplus_{j=0}^{\infty} M_{2j}(\CC,\Gamma(1))
= \CC[E_4,E_6].$$
The derivative $D$ preserves the ring of $\Gamma(1)$-quasimodular forms $\CC[E_4,E_6,E_2]$ since 
$DE_2=(E_2^2-E_4)/12$, $DE_4=(E_2E_4-E_6)/3$ and $DE_6=(E_2E_6-E_4^2)/2$; so we define $G(g;X,Y,Z)$ such that 
$$g(z)=G(g;E_4(z),E_6(z),E_2(z)).$$

We have that for $f\in M_{2j}(\CC,\Gamma(1))$,
\begin{equation}\label{basis-basis-0}
\cG(f;X,Y)=G(f;X^8+224X^4Y+256Y^2,
X^{12}-528X^8Y-8448X^4Y^2+4096Y^3);
\end{equation}
this is because we have that
\begin{equation}\label{E4-F2}
\left\{
\begin{array}{l}
E_4=\Theta^8+224\Theta^4F_2+256F_2^2,\\
E_6= \Theta^{12}-528\Theta^8F_2
-8448\Theta^4F_2^2+4096F_2^3.
\end{array}\right.
\end{equation}

Now, let $p\ge 5$ be a prime, let $\ZZ_{(p)}$ be a local ring at $p$, and let $n\ge0$ be an integer. By Lemma \ref{p-integral-coefficients} and \eqref{D-basis}, if a $\Gamma_1(4)$-modular form $g$ has $p$-integral Fourier coefficients, then $\cG(D^{n}g;X,Y,Z)$ also has $p$-integral coefficients. For $m\in\NN$, we denote by $\overline{g}\in (\ZZ/p^m\ZZ)[[q]]$ (resp., $\overline{\cG}(D^ng;X,Y,Z)\in (\ZZ/p^m\ZZ)[X,Y,Z]$) the image obtained by reducing its Fourier coefficients (resp., coefficients) mod $p^m$ under the the canonical map $\ZZ_{(p)}\to \ZZ/p^m\ZZ$. 
The same property holds for a $\Gamma(1)$-modular form $g\in \ZZ_{(p)}[[q]]$; hence we similarly define $\overline{G}(D^{n}g;X,Y,Z)\in (\ZZ/p^m\ZZ)[X,Y,Z]$. 

Also, it is well-known that $E_{p-1}\in M_{p-1}(\ZZ_{(p)},\Gamma(1))$ and $E_{p+1}\in M_{p+1}(\ZZ_{(p)},\Gamma(1))$. Let $\cA_p$ and $A_p$ be defined by
$$\cA_p:=\cG(E_{p-1};X,Y),\qquad 
A_p:=G(E_{p-1};X,Y),$$
which are elements in $\ZZ_{(p)}[X,Y]$.
\smallskip

\section{Mod $p^m$ modular forms}\label{sec-3}

Many authors generalized the theories of Serre \cite{Ser73} and Katz \cite{Kat73} regarding $p$-adic congruences of modular forms of integral weight to those of half-integral weight (we refer to \cite{Kob86,Tup06}). In this section we study some analogous facts on modulo $p^m$ congruences of $\Gamma_1(4)$-modular forms of half-integral weight in terms of $(\ZZ/p^m\ZZ)[X,Y]$.

Let $k\in\ZZ$ or $\frac{1}{2}+\ZZ$ and $m\in\mathbb{N}$. 
Let $\Gamma'\subset \Gamma(1)$ be a congruence subgroup such that $\Gamma'\subset \Gamma_1(4)$ if $k\in\frac{1}{2}+\ZZ$. By $\widetilde{M}_k(\ZZ/p^m\ZZ,\Gamma')$ we denote the $\ZZ/p^m\ZZ$-module (in $(\ZZ/p^m\ZZ)[[q]]$) obtained from $M_k(\ZZ_{(p)},\Gamma')$ by reducing its Fourier coefficients mod $p^m$. The {\it mod $p^m$ filtration} of 
$\overline{f}\in \widetilde{M}_k(\ZZ/p^m\ZZ,\Gamma')$ is defined to be
$$w_{p^m}(\overline{f}):=\inf\{k' \mid \overline{f}=\overline{h} \,\text{ for some }\,\overline{h}\in\widetilde{M}_{k'}(\ZZ/p^m\ZZ,\Gamma')\},$$ 
where we have the convention that the modular form $0$ has weight $-\infty$.

The following theorem is an analogue of partial results by Serre and Katz (see \cite[Ch. X, Theorem 7.5]{Lan95} and \cite[Lemma 2.4]{LS14}), and it will be used to prove Lemma \ref{LS-prop-4.3}.

\begin{thm}\label{Kob-Lan} 
Let $p\ge5$ be a prime, $k\in\ZZ$ or $\frac{1}{2}+\ZZ$, and $m\in\NN$.
Let $f\in M_k(\ZZ_{(p)},\Gamma_1(4))$ such that 
$f\not\equiv 0$ $(\text{mod }p\ZZ_{(p)}[[q]])$. The filtration $w_{p^m}(\overline{f})$ is less than $k$ 
if and only if 
$\overline{\cA}_p(X,Y)^{p^{m-1}}$ divides $\overline{\cG}(f;X,Y)$ in $(\ZZ/p^m\ZZ)[X,Y]$. 
\end{thm}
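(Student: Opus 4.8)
The plan is to transfer everything to the polynomial side through the coefficient-reduction map
$$\pi_m\colon \mathbb{Z}/p^m\mathbb{Z}[X,Y]\to \mathbb{Z}/p^m\mathbb{Z}[[q]],\qquad X\mapsto \overline{\Theta},\quad Y\mapsto \overline{F_2},$$
which carries $\overline{\cG}(f;X,Y)$ to $\overline{f}$. Writing $N:=(p-1)p^{m-1}$, the key fact behind both implications is the congruence
$$E_{p-1}^{\,p^{m-1}}\equiv 1\pmod{p^m},$$
equivalently $\pi_m(\overline{\cA}_p(X,Y)^{p^{m-1}}-1)=0$. I would prove it by induction: starting from Kummer's congruence $E_{p-1}\equiv 1\pmod p$, write $E_{p-1}=1+pg$ with $g\in\mathbb{Z}_{(p)}[[q]]$; since $p\mid\binom{p}{i}$ for $1\le i\le p-1$ and $p\ge 5$, every non-constant term of $(1+pg)^{p}$ is divisible by $p^{2}$, and the same step iterated gives $E_{p-1}^{\,p^{j}}\equiv 1\pmod{p^{j+1}}$.

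For the \emph{if} direction assume $\overline{\cA}_p(X,Y)^{p^{m-1}}\mid\overline{\cG}(f;X,Y)$, say $\overline{\cG}(f)=\overline{\cA}_p^{\,p^{m-1}}\,\overline{Q}$. The polynomial $\overline{\cA}_p^{\,p^{m-1}}$ is isobaric of weight $N$, and the monomial $X^{2N}$ occurs in it with coefficient $1$ (the coefficient of $X^{2(p-1)}$ in $\overline{\cA}_p$ is the $q^{0}$-coefficient of $E_{p-1}$, namely $1$, because $F_2=q+\cdots$). By McCoy's criterion a polynomial with a unit coefficient is a non-zero-divisor, so $\overline{\cA}_p^{\,p^{m-1}}$ is a non-zero-divisor and, comparing weights, $\overline{Q}$ is isobaric of weight $k-N$; lifting it to $Q\in\mathbb{Z}_{(p)}[X,Y]$ and setting $g:=Q(\Theta,F_2)\in M_{k-N}(\mathbb{Z}_{(p)},\Gamma_1(4))$ gives, by the congruence, $\overline{f}=\pi_m(\overline{\cA}_p^{\,p^{m-1}})\cdot\overline{g}=\overline{g}$, whence $w_{p^m}(\overline{f})\le k-N<k$.

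The \emph{only if} direction rests on the structure statement
$$\ker\pi_m=(\overline{\cA}_p(X,Y)^{p^{m-1}}-1)\subset \mathbb{Z}/p^m\mathbb{Z}[X,Y],$$
the $\Gamma_1(4)$ analogue modulo $p^m$ of the Swinnerton--Dyer/Serre/Katz description of relations among reduced forms. Granting it, suppose $w_{p^m}(\overline{f})=k_0<k$ and pick $h\in M_{k_0}(\mathbb{Z}_{(p)},\Gamma_1(4))$ with $\overline{f}=\overline{h}$; then $\overline{\cG}(f)-\overline{\cG}(h)=(\overline{\cA}_p^{\,p^{m-1}}-1)\,\overline{R}$ for some $\overline{R}$. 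By Lemma \ref{p-integral-coefficients} the polynomials $\cG(f)$ and $\cG(h)$ are isobaric of weights $k>k_0$, so the left side has top weight $k$. Decomposing $\overline{R}$ into isobaric parts, the top weight of the right side is $N$ plus the top weight of $\overline{R}$ — there is no cancellation because $\overline{\cA}_p^{\,p^{m-1}}$ is a non-zero-divisor — so the top weight of $\overline{R}$ is $k-N$ and reading off the weight-$k$ component yields $\overline{\cG}(f)=\overline{\cA}_p^{\,p^{m-1}}\,\overline{R}^{(k-N)}$, the desired divisibility.

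The main obstacle is the structure statement $\ker\pi_m=(\overline{\cA}_p^{\,p^{m-1}}-1)$; the inclusion $\supseteq$ is the congruence above, and the reverse inclusion is injectivity of the map induced on $\mathbb{Z}/p^m\mathbb{Z}[X,Y]/(\overline{\cA}_p^{\,p^{m-1}}-1)$. I would first treat $m=1$, where the statement is Swinnerton--Dyer for $\Gamma_1(4)$: using $\overline{\cA}_p=X^{2(p-1)}+\cdots$, every class has a representative of $X$-degree below $2(p-1)$, and injectivity becomes the linear independence over $\mathbb{F}_p$ of the associated $q$-expansions. For the passage to general $m$, the cleanest route I see is to exploit the relations \eqref{E4-F2}, which exhibit $\mathbb{Z}_{(p)}[E_4,E_6]$ as a graded subring of $\mathbb{Z}_{(p)}[\Theta,F_2]$ over which the latter is finite and free (the ratio of Hilbert series is a polynomial with non-negative coefficients, and both rings are Cohen--Macaulay of dimension $2$). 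Freeness base-changes to $\mathbb{Z}/p^m\mathbb{Z}$, and then the $\mathrm{SL}_2(\mathbb{Z})$ result \cite[Ch.~X, Theorem 7.5]{Lan95}, \cite[Lemma 2.4]{LS14} transports to $\Gamma_1(4)$ once one checks that a free $\mathbb{Z}_{(p)}[E_4,E_6]$-basis of $\mathbb{Z}_{(p)}[\Theta,F_2]$ maps to a family of $q$-expansions independent over the mod $p^m$ forms on $\mathrm{SL}_2(\mathbb{Z})$; verifying this independence — equivalently, that no new relations appear modulo $p^m$ — is the delicate point.
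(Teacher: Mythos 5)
Your ``if'' direction is correct and is essentially the paper's (the paper disposes of it in one line from $\overline{\cA}_p(\Theta,F_2)^{p^{m-1}}\equiv \overline{E}_{p-1}^{p^{m-1}}\equiv 1 \pmod{p^m}$; your isobaric bookkeeping and the McCoy/non-zero-divisor observation are a careful expansion of the same point). The genuine gap is in the ``only if'' direction: your entire argument is conditional on the structure statement $\ker\pi_m=(\overline{\cA}_p(X,Y)^{p^{m-1}}-1)$ in $\mathbb{Z}/p^m\mathbb{Z}[X,Y]$, and you never prove it. For $m=1$ you reduce it to ``linear independence over $\FF_p$ of the associated $q$-expansions,'' which is asserted rather than shown --- that independence \emph{is} the content of a Swinnerton--Dyer-type theorem for $\Gamma_1(4)$, not a formality. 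For general $m$ you yourself flag the decisive step as open: that a free $\mathbb{Z}_{(p)}[E_4,E_6]$-basis of $\mathbb{Z}_{(p)}[\Theta,F_2]$ remains independent over the mod $p^m$ forms on $\Gamma(1)$, i.e.\ that no new relations among $q$-expansions appear modulo $p^m$. This is not a routine verification one can defer: linear independence over the non-reduced ring $\mathbb{Z}/p^m\mathbb{Z}$ is precisely where higher congruence phenomena live (torsion coefficients can conspire in ways invisible mod $p$), and all of the difficulty of Theorem \ref{Kob-Lan} is concentrated in exactly this point. As written, the proposal proves the theorem only modulo an unproven key lemma that is at least as deep as the theorem itself.

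It is worth seeing how the paper sidesteps the structure statement entirely, because that is the fix. For $k\in\mathbb{Z}$ the paper invokes Katz \cite[Corollary 4.4.2]{Kat73}, which is proved for arithmetic level structures including $\Gamma_1(4)$ --- so no descent from $\mathrm{SL}_2(\mathbb{Z})$ via freeness of $\mathbb{Z}_{(p)}[\Theta,F_2]$ over $\mathbb{Z}_{(p)}[E_4,E_6]$ is needed in the first place. For $k\in\frac{1}{2}+\mathbb{Z}$ it multiplies by $\Theta$: from $w_{p^m}(\overline{\Theta f})<k+\frac{1}{2}$ and the integral-weight case one gets $\overline{\cA}_p(X,Y)^{p^{m-1}}\mid X\,\overline{\cG}(f;X,Y)$, and the extraneous factor $X$ is stripped off by checking $X\nmid\overline{\cA}_p(X,Y)$. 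That last check uses the known factorization of $\widetilde{A}_p$ over $\overline{\FF}_p$ (irreducible factors $X$, $Y$, $X^3-\alpha Y^2$ with $\alpha\neq 1$, see \cite[pp.~166--167]{Lan95}) together with the specialization $X=0$ in \eqref{basis-basis-0}, under which $(E_4,E_6)\mapsto\big((16Y)^2,(16Y)^3\big)$ and hence $\overline{\cA}_p(0,Y)\equiv\beta(16Y)^{(p-1)/2}$ with $\beta\in\FF_p^{\times}$, so no factor vanishes. If you replace your kernel computation by this citation-plus-$\Theta$-multiplication argument, the rest of your isobaric analysis (which is sound) becomes unnecessary in the ``only if'' direction as well.
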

\begin{pf}
$(\Leftarrow)$ It is trivial since $\overline{\cA}_p(\Theta,F_2)^{p^{m-1}}\equiv\overline{E}_{p-1}^{p^{m-1}}\equiv 1$ $(\text{mod }p^m\ZZ_{(p)}[[q]])$.\\ 
$(\Rightarrow)$ 
If $k\in\ZZ$, it directly follows from \cite[Corollary 4.4.2]{Kat73}. Note that if $g$, $h\in\ZZ_{(p)}[[q]]$ are not congruent to $0$ $(\text{mod }p\ZZ_{(p)}[[q]])$, then neither is $gh$ as $(\ZZ/p\ZZ)[[q]]$ is an integral domain. Now, we assume that $k\in \frac{1}{2}+\ZZ$, $f\in M_k(\ZZ_{(p)},\Gamma_1(4))$, $f\not\equiv 0$ $(\text{mod }p\ZZ_{(p)}[[q]])$ and $w_{p^m}(\overline{f})<k$. Then we have that $k+\tfrac{1}{2}\in \ZZ$, 
$\Theta f\in M_{k+\frac{1}{2}}(\ZZ_{(p)},\Gamma_1(4))$, $\Theta f\not\equiv 0$ $(\text{mod }p\ZZ_{(p)}[[q]])$ and $w_{p^m}(\overline{\Theta f})<k+\tfrac{1}{2}$; so, we have that in $(\ZZ/p^m\ZZ)[X,Y]$ 
$$\overline{\cA}_p(X,Y)^{p^{m-1}} \mid \overline{\cG}(\Theta f; X,Y).$$ 
We note that $\cG(\Theta f; X,Y)=X\cG(f; X,Y)$. We recall $A_p(X,Y):=G(E_{p-1};X,Y)\in\ZZ_{(p)}[X,Y]$, and we define $\widetilde{A}_p(X,Y)\in \FF_p[X,Y]$ to be its reduction mod $p$. Let $\overline{\FF}_p$ be the algebraic closure of $\FF_p$. It is well-known that over $\overline{\FF}_p$, the irreducible factors of $\widetilde{A}_p(X,Y)$ must be of the form 
\begin{equation}\label{Lang-text}
X,\,\,\, Y,\,\, \text{ or }\,\, X^3-\alpha Y^2\,\, \text{ with }\,\, \alpha\neq1
\end{equation} 
(see \cite[pp. 166-167]{Lan95}). Let $\langle X,p\rangle$ be the ideal generated by $X$ and $p$ in the ring $(\ZZ/p^m\ZZ)[X,Y]$. By \eqref{basis-basis-0} we have that in $\big((\ZZ/p^m\ZZ)[X,Y]\big)/\langle X,p\rangle$
$$\overline{\cA}_p(0,Y)+\langle X,p\rangle
=\widetilde{A}_p((16Y)^2,(16Y)^3)+\langle X,p\rangle
=\beta(16Y)^{(p-1)/2}+\langle X,p\rangle,$$
for some $\beta\in \FF_p$. 
By \eqref{Lang-text}, we have that $\beta$ is nonzero; so, we get $X\nmid \overline{\cA}_p(X,Y)$ in $(\ZZ/p^m\ZZ)[X,Y]$. 
Therefore, $\overline{\cA}_p(X,Y)^{p^{m-1}}$ divides $\overline{\cG}(f;X,Y)$ in $(\ZZ/p^m\ZZ)[X,Y]$. 
\end{pf}

\begin{rmk}{\rm 
(a) In the proof of Theorem \ref{Kob-Lan} we show the property that $X\nmid \overline{\cA}_p(X,Y)$ in $(\ZZ/p^m\ZZ)[X,Y]$. We mention that there is an alternative way for proving the property without using \eqref{Lang-text} as follows, and this is suggested by an anonymous reviewer. It is sufficient to show that the coefficient of $Y^{(p-1)/2}$ in $\cA_p(X,Y)$ is not divisible by $p$. The congruence subgroup $\Gamma_0(4)$ has three cusps $\{\infty, -\frac{1}{2}, 0\}$, and the constant terms in Fourier expansions of modular forms at cusps are given as follows: 
$$
\left\{
\begin{array}{l}
E_{p-1} \text{ has constant term } 1 \text{ at all the cusps},\\
\Theta \text{ has constant terms } \,\,1,\,\,\,0,\,\,\,\frac{1-i}{2} \text{ at the cusps } \infty,\,-\frac{1}{2},\,0, \text{ respectively},\\
F_2 \text{ has constant terms } 0,\,\frac{1}{16},\,-\frac{1}{64} \text{ at the cusps } \infty,\,-\frac{1}{2},\,0, \text{ respectively}
\end{array}
\right.
$$
(cf. \cite[Section 2 and p. 169]{Tup06}). Thus, the coefficient of $Y^{(p-1)/2}$ in $\cA_p(X,Y)$ is the constant term in the Fourier expansion of $E_{p-1}/F_2^{(p-1)/2}$ at $-\frac{1}{2}$, 
which is a power of 2.

(b) In Theorem \ref{Kob-Lan} we require the condition that $f\not\equiv 0$ $(\text{mod }p\ZZ_{(p)}[[q]])$. 
We point out that this condition is missing in \cite[Lemma 2.4]{LS14}; there is a counter example for this as follows. Let $k=p^{m-1}(p-1)+4$ and $f=pE_4E_{p-1}^{p^{m-1}}\in M_{k}(\ZZ_{(p)},\Gamma(1))$. Then we see that $w_{p^{m+1}}(\overline{f})=w_{p^{m+1}}(\overline{E}_4)<k$; however, $\overline{A}_p(X,Y)^{p^{m}}$ cannot divide $\overline{G}(f;X,Y)$ in $(\ZZ/p^{m+1}\ZZ)[X,Y]$. In fact, \cite[Proposition 4.3]{LS14} is proved by using \cite[Lemma 2.4]{LS14}, and it needs to be revised; for instance, we refer to Lemma \ref{LS-prop-4.3}, which is parallel to \cite[Proposition 4.3]{LS14}.}
\end{rmk}
\smallskip

\section{A quasi-valuation $\nu_p$ on $\ZZ_{(p)}[\Theta, F_2,E_2]$}\label{sec-4}

For a prime $p\ge5$, Larson and Smith \cite{LS14} defined a quasi-valuation $v_{p}$ 
on  the ring of $\Gamma(1)$-quasimodular forms 
$\ZZ_{(p)}[E_4,E_6,E_2]$, and they studied its properties 
by calculating the Rankin-Cohen bracket.
In this section, we define a quasi-valuation $\nu_p$ 
on the ring of $\Gamma_1(4)$-quasimodular forms 
$\ZZ_{(p)}[\Theta,F_2,E_2]$, and we prove that its properties are parallel to the  results of \cite{LS14}. 
We remark that using a formula by Zagier (see Proposition \ref{Zag-prop}) instead of the Rankin-Cohen bracket simplifies proofs.

We begin by introducing {\it quasi-valuations}.
\begin{defn}{\rm \cite[p. 319]{Sar12}}
Let $R$ be a commutative ring. A quasi-valuation on $R$ is a map $\nu:R\to \ZZ\cup\{\infty\}$ such that for all $x$, $y\in R$, 
\begin{enumerate}[(a)]
\item $\nu(x)=\infty$ if and only if $x=0$, 
\item $\nu(xy)\ge \nu(x)+\nu(y)$,
\item $\nu(x+y)\ge \min\{\nu(x),\nu(y)\}$.
\end{enumerate}
\end{defn}

We remark that $\nu$ is called a {\it valuation} on $R$ 
if it further satisfies $\nu(xy)= \nu(x)+\nu(y)$ for all $x$, $y\in R$.
Now, let $p\ge5$ be a prime.
By $\langle\cA_p^p,p\rangle$ we denote the ideal generated by $\cA_p^p$ and $p$ in the polynomial ring $\ZZ_{(p)}[X,Y,Z]$.
We define a map $\nu_p:\ZZ_{(p)}[X,Y,Z]\to \ZZ\cup\{\infty\}$ by 
$$\nu_p(\cG):=\sup\{n\mid \cG\in \langle\cA_p^p,p\rangle^n\}.$$
Then $\nu_p$ is a quasi-valuation. 
For a $\Gamma_1(4)$-quasimodular form $g\in\ZZ_{(p)}[\Theta,F_2,E_2]$, we simply write $\nu_p(g)$ instead of $\nu_p(\cG(g;X,Y,Z))$.
We also have that for all $g\in\ZZ_{(p)}[\Theta,F_2,E_2]$ 
$$\nu_p(Dg)\ge \nu_p(g) \,\,\text{ and }\,\, \nu_p(g_0)\ge \nu_p(g),$$
where $g_0$ is the modular part of $g$.

Let $k\in\ZZ$ or $\frac{1}{2}+\ZZ$ and 
$\Gamma'\subset \Gamma(1)$ be a congruence subgroup such that $\Gamma'\subset \Gamma_1(4)$ if $k\in\frac{1}{2}+\ZZ$. 
For $f\in M_k(\CC,\Gamma')$, we define a sequence of modular forms $f_n\in M_{k+2n}(\CC,\Gamma')$ recursively by
$$f_{n+1}:=\left(D f_n-\frac{k+2n}{12}E_2f_n\right)-\frac{n(n+k-1)}{144}E_4f_{n-1}\quad (n\ge 0)$$
with initial condition $f_0=f$ and $f_{-1}=0$.
Then a formula by Zagier \cite[(37)]{Zag94} is equivalent to the following proposition (we also refer to \cite[p. 55]{Zag08}).
\begin{prop}\label{Zag-prop}
With the same notation as above, we have for any $n\ge 0$ 
$$D^nf=\sum_{j=0}^{n}
{{n}\choose{j}} {{n+k-1}\brack{n+k-1-j}}
f_{n-j} \big(\frac{E_2}{12}\big)^j,$$
where for $x>y\ge 0$, 
${{x}\brack{y}}:=x(x-1)(x-2)\cdots (y+2)(y+1)$ and ${{y}\brack{y}}:=1$.
\end{prop}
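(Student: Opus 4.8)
The plan is to induct on $n\ge 0$ with $k$ held fixed, abbreviating the coefficients as $c_{n,i}:=\binom{n}{i}{{n+k-1}\brack{n+k-1-i}}$. The base case $n=0$ is immediate, since the right-hand side reduces to the single term $c_{0,0}f_0=f$. For the inductive step I would assume the stated formula for $D^nf$ and apply $D$ to both sides, expanding each summand $c_{n,i}f_{n-i}(E_2/12)^i$ by the Leibniz rule. Two structural relations feed into this: the derivative $D(E_2/12)=(E_2/12)^2-E_4/144$, coming from $DE_2=(E_2^2-E_4)/12$, and the defining recursion rearranged as
$$Df_{n-i}=f_{n-i+1}+(k+2(n-i))\frac{E_2}{12}f_{n-i}+\frac{(n-i)(n-i+k-1)}{144}E_4f_{n-i-1}.$$

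After substituting and expanding, every resulting term is either free of $E_4$ or carries exactly one factor of $E_4$. Since the target formula for $D^{n+1}f$ contains no $E_4$ at all, the crucial point is that the $E_4$-terms must cancel completely. Grouping by the monomial $E_4\,f_{n-s-1}(E_2/12)^s$, I would find that its total coefficient equals $\frac{1}{144}$ times
$$c_{n,s}(n-s)(n-s+k-1)-(s+1)c_{n,s+1},$$
so the cancellation amounts to the single identity $c_{n,s}(n-s)(n-s+k-1)=(s+1)c_{n,s+1}$. I would verify this directly from the definition of the falling bracket, using ${{n+k-1}\brack{n+k-2-s}}=(n+k-1-s){{n+k-1}\brack{n+k-1-s}}$ together with $(s+1)\binom{n}{s+1}=(n-s)\binom{n}{s}$, after which both sides reduce to the same product (the coincidence $n-s+k-1=n+k-1-s$ supplies the final matching). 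I expect this $E_4$-cancellation to be the main obstacle, since it is the one place where the exact shape of the coefficients, and not merely their abstract existence, is forced.

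It then remains to reassemble the $E_4$-free terms into the claimed formula at level $n+1$. Reading off the coefficient of $f_{n+1-j}(E_2/12)^j$ produces $c_{n,j}+(k+2n-j+1)c_{n,j-1}$, with the empty-term conventions understood at the endpoints $j=0$ and $j=n+1$. Writing both coefficients over the common factor ${{n+k}\brack{n+k-j}}$ and invoking Pascal's rule $\binom{n+1}{j}=\binom{n}{j}+\binom{n}{j-1}$, the desired equality $c_{n,j}+(k+2n-j+1)c_{n,j-1}=c_{n+1,j}$ collapses to the elementary identity $j\binom{n}{j}=(n-j+1)\binom{n}{j-1}$, which is immediate, completing the induction. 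The only remaining care is the endpoint bookkeeping against the conventions $f_{-1}=0$ and ${{y}\brack{y}}=1$, which I would check separately but expect to be entirely routine.
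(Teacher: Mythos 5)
Your proof is correct, but there is nothing in the paper to compare it against line by line: the paper contains no proof of Proposition \ref{Zag-prop} at all, quoting it as equivalent to formula (37) of \cite{Zag94} (see also \cite[p.~55]{Zag08}). Your induction therefore serves as a self-contained elementary substitute for the citation, and I verified both of its hinges. Writing $c_{n,i}=\binom{n}{i}{{n+k-1}\brack{n+k-1-i}}$, the Leibniz expansion using $D(E_2/12)=(E_2/12)^2-E_4/144$ and $Df_{n-i}=f_{n-i+1}+(k+2(n-i))\tfrac{E_2}{12}f_{n-i}+\tfrac{(n-i)(n-i+k-1)}{144}E_4f_{n-i-1}$ makes the total coefficient of $\tfrac{1}{144}E_4\,f_{n-s-1}(E_2/12)^s$ equal to $c_{n,s}(n-s)(n+k-1-s)-(s+1)c_{n,s+1}$, and this vanishes precisely by your two ingredients $(s+1)\binom{n}{s+1}=(n-s)\binom{n}{s}$ and ${{n+k-1}\brack{n+k-2-s}}=(n+k-1-s){{n+k-1}\brack{n+k-1-s}}$; likewise the reassembly identity $c_{n,j}+(k+2n+1-j)c_{n,j-1}=c_{n+1,j}$ does collapse, after clearing the common product of linear factors and applying Pascal's rule, to $j\binom{n}{j}=(n+1-j)\binom{n}{j-1}$, and the endpoints $j=0$, $j=n+1$ work out with $f_{-1}=0$ and the vanishing factors $n-i$ at $i=n$ and $i$ at $i=0$. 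Two remarks worth making explicit in a write-up: first, your argument never uses holomorphy or modularity of the $f_n$ --- the formula is a formal consequence of the recursion, so the induction is robust (and this is why it applies uniformly to half-integral $k$); second, the paper defines ${{x}\brack{y}}$ only for $x>y\ge 0$, while for half-integral $k$ the lower entry $n+k-1-i$ can be a negative half-integer (e.g.\ $k=\tfrac12$, $i=n$), so one should note that the defining product $x(x-1)\cdots(y+1)$, a product of $x-y$ factors, is used verbatim; all your coefficient identities are then polynomial identities in $k$ and hold without restriction.
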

Now, we further suppose that $f\in M_k(\ZZ_{(p)},\Gamma_1(4))$.
By Lemma \ref{p-integral-coefficients} and Proposition \ref{Zag-prop}, we have that 
\begin{equation}\label{Zag-eq-1}
\cG\big(\big(D^{p}f-(D^{p}f)_0\big);X,Y,Z\big)\in p\ZZ_{(p)}[X,Y,Z]
\end{equation}
and
\begin{equation}\label{Zag-eq-2}
\cG\big(\big(D^{p^2}f-(D^{p^2}f)_0\big);X,Y,Z\big)\in p^2\ZZ_{(p)}[X,Y,Z].
\end{equation}

\begin{lem}\label{LS-prop-4.3}
Let $p\ge 5$ be a prime and $k\in\ZZ$ or $\frac{1}{2}+\ZZ$.
For $f\in M_k(\ZZ_{(p)},\Gamma_1(4))$, we have
$\nu_p(D^{p^2}f)\ge 2$.
\end{lem}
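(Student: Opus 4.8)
The plan is to reduce the statement to its modular part and then to peel off two factors of $\cA_p^p$, controlling the mod $p$ and the mod $p^2$ information in turn.

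First I would use \eqref{Zag-eq-2} to discard the non-modular contribution. Writing $h:=(D^{p^2}f)_0=f_{p^2}\in M_{k+2p^2}(\mathbb{Z}_{(p)},\Gamma_1(4))$, equation \eqref{Zag-eq-2} gives $\cG(D^{p^2}f;X,Y,Z)=\cG(h;X,Y)+p^2\xi$ for some $\xi\in\mathbb{Z}_{(p)}[X,Y,Z]$. Since $p^2\in\langle\cA_p^p,p\rangle^2$, it suffices to prove $\nu_p(h)\ge 2$, that is, $\cG(h;X,Y)\in\langle\cA_p^p,p\rangle^2=\langle\cA_p^{2p},\,p\cA_p^p,\,p^2\rangle$. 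The next step is to extract the mod $p$ skeleton: on $q$-expansions $n^{p^2}\equiv n^p\equiv n\ (\text{mod }p)$, so $\overline{D^{p^2}f}=\overline{D^pf}=\overline{Df}$ modulo $p$, and combining this with \eqref{Zag-eq-1} and \eqref{Zag-eq-2} yields $\overline{h}\equiv\overline{f_p}\ (\text{mod }p)$, where $f_p=(D^pf)_0$ has weight $k+2p$. Because $\cA_p(\Theta,F_2)=E_{p-1}\equiv 1\ (\text{mod }p)$, the two isobaric weight-$(k+2p^2)$ polynomials $\overline{\cG}(h;X,Y)$ and $\widetilde{\cA}_p(X,Y)^{2p}\,\overline{\cG}(f_p;X,Y)$ have the same image under $X\mapsto\Theta,\,Y\mapsto F_2$; invoking injectivity of the $q$-expansion map on each isobaric piece of $\mathbb{F}_p[X,Y]$ (its kernel being generated by $\widetilde{\cA}_p-1$) forces the polynomial identity $\overline{\cG}(h;X,Y)=\widetilde{\cA}_p(X,Y)^{2p}\,\overline{\cG}(f_p;X,Y)$. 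Lifting, I obtain $\cG(h;X,Y)=\cA_p^{2p}\,\cG(f_p;X,Y)+pR$ for some isobaric $R\in\mathbb{Z}_{(p)}[X,Y]$ of weight $k+2p^2$. The first summand already lies in $\langle\cA_p^{2p}\rangle$, so the whole problem collapses to showing $R\in\langle\cA_p^p,p\rangle$, equivalently $\widetilde{\cA}_p^{\,p}\mid\widetilde{R}$ in $\mathbb{F}_p[X,Y]$.

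The heart of the argument, and the step I expect to be the main obstacle, is identifying and bounding this first-order remainder $R$. Evaluating at $(\Theta,F_2)$ gives $pR(\Theta,F_2)=h-E_{p-1}^{2p}f_p$; since $E_{p-1}^{2p}\equiv 1\ (\text{mod }p^2)$ and $h\equiv D^{p^2}f\equiv D^pf\ (\text{mod }p^2)$ (using $n^{p^2}\equiv n^p\ (\text{mod }p^2)$ and \eqref{Zag-eq-2}), while $f_p=D^pf-p\Psi_1$ by \eqref{Zag-eq-1}, I would conclude $h-f_p\equiv p\Psi_1\ (\text{mod }p^2)$ and hence $\overline{R}=\overline{\Psi_1}$, where $\Psi_1:=\big(D^pf-(D^pf)_0\big)/p$ is the leading $E_2$-correction of $D^pf$. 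Proposition \ref{Zag-prop} makes $\Psi_1$ explicit as $\tfrac1p\sum_{i=1}^{p}\binom{p}{i}{p+k-1\brack p+k-1-i}f_{p-i}(E_2/12)^i$, with $p$-integral coefficients. Using the standard bound $w_p(\overline{E_2})\le p+1$ and $w_p(\overline{f_{p-i}})\le k+2(p-i)$, each summand is a mod $p$ modular form of weight at most $k+2p+i(p-1)$, maximised at $i=p$ to give $w_p(\overline{\Psi_1})\le k+p^2+p=(k+2p^2)-p(p-1)$.

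To finish, I would convert this filtration bound into the required divisibility: iterating Theorem \ref{Kob-Lan} with $m=1$, a filtration drop of $p(p-1)$ below the weight $k+2p^2$ of $R(\Theta,F_2)$ is exactly equivalent to $\widetilde{\cA}_p^{\,p}\mid\widetilde{R}$, whence $R\in\langle\cA_p^p,p\rangle$. Then $pR\in\langle p\cA_p^p,\,p^2\rangle\subset\langle\cA_p^p,p\rangle^2$, and together with $\cA_p^{2p}\,\cG(f_p;X,Y)\in\langle\cA_p^{2p}\rangle$ this gives $\cG(h;X,Y)\in\langle\cA_p^p,p\rangle^2$, i.e. $\nu_p(h)\ge 2$, and by the first reduction $\nu_p(D^{p^2}f)\ge 2$. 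The delicate points to verify carefully are the injectivity of the $q$-expansion map on isobaric pieces modulo $p$ (so the mod $p$ congruence upgrades to the polynomial factorisation), the precise $p$-integrality of the Zagier coefficients when $k\in\tfrac12+\mathbb{Z}$, and the equivalence between the filtration drop and divisibility by $\widetilde{\cA}_p^{\,p}$ supplied by Theorem \ref{Kob-Lan}.
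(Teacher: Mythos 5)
Your proof is correct, and it uses the same raw ingredients as the paper --- the congruences $(D^{p^2}f)_0\equiv D^{p^2}f\equiv D^pf\ (\mathrm{mod}\ p^2\mathbb{Z}_{(p)}[[q]])$ from \eqref{Zag-eq-2} and Euler's theorem, the $E_2$-correction $D^pf-(D^pf)_0=pH(E_2)$ with $p$-integral $h_i$ from \eqref{Zag-eq-1} and Proposition \ref{Zag-prop}, the substitutions $E_{p-1}\equiv1$, $E_{p+1}\equiv E_2\ (\mathrm{mod}\ p)$, and Theorem \ref{Kob-Lan} --- but you organize the mod $p^2$ bookkeeping genuinely differently. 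The paper modularizes the correction term by replacing $E_2^i$ with $E_{p-1}^{2p-i}E_{p+1}^i$, producing a single honest modular form $(D^{p^2}f)_0-p\sum_{i=1}^{p}h_iE_{p-1}^{2p-i}E_{p+1}^i$ of weight $k+2p^2$ congruent mod $p^2$ to $(D^pf)_0$ of weight $k+2p$; two applications of the $m=2$ case of Theorem \ref{Kob-Lan} then give $\overline{\cA}_p^{2p}$-divisibility in $\mathbb{Z}/p^2\mathbb{Z}[X,Y]$ at one stroke, and the $\langle\cA_p^p,p\rangle^2$-membership of the correction is visible by construction, since the exponent $2p-i\ge p$ supplies the factor $\cA_p^p$ with no filtration computation for the remainder. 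You instead never invoke the $m=2$ case: you extract the exact mod $p$ identity $\overline{\cG}(h;X,Y)=\widetilde{\cA}_p^{2p}\overline{\cG}(f_p;X,Y)$, identify the first-order remainder $\overline{R}$ with $\overline{H(E_2)}=\sum_i\overline{h_iE_{p+1}^i}$, bound its filtration by $k+p^2+p$, and convert that to $\widetilde{\cA}_p^{\,p}\mid\widetilde{R}$ by $p$ iterations of the $m=1$ case (your weight count is right: $k+2p^2-j(p-1)>k+p^2+p$ holds exactly for $j<p$). This buys an argument resting only on mod $p$ filtration theory, at the cost of the two extra inputs you flag, both of which are available: for the isobaric injectivity you do not need the full kernel statement about $\langle\widetilde{\cA}_p-1\rangle$ (which the paper never proves); it suffices that an isobaric $P\in\mathbb{Z}_{(p)}[X,Y]$ with $P(\Theta,F_2)\in p\mathbb{Z}_{(p)}[[q]]$ lies in $p\mathbb{Z}_{(p)}[X,Y]$, which follows from Lemma \ref{p-integral-coefficients} (note $R_{(6)}=\mathbb{Z}_{(p)}$ for $p\ge5$) and the algebraic independence of $\Theta$ and $F_2$; and the $p$-integrality of the Zagier coefficients for $k\in\frac12+\mathbb{Z}$ (for $i=p$ the product ${{p+k-1}\brack{k-1}}$ of $p$ consecutive half-integers acquires exactly one factor of $p$) is used implicitly by the paper as well, so you are on equal footing there. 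Finally, note that your dévissage in the iterated $m=1$ step, lifting each polynomial quotient $\widetilde{R}_j$ to a modular form $R_j(\Theta,F_2)$ with unchanged mod $p$ $q$-expansion before reapplying Theorem \ref{Kob-Lan}, is the same mechanism the paper compresses into the phrase ``applying Theorem \ref{Kob-Lan} twice,'' so the two proofs ultimately expend the same filtration drop $2p(p-1)$, just sliced as $p$ thin mod $p$ layers plus a remainder rather than two thick mod $p^2$ layers.
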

\begin{pf}
By \eqref{Zag-eq-2} and applying Euler's totient theorem to Fourier coefficients, we have 
$$
(D^{p^2}f)_0 \equiv D^{p^2}f \equiv D^pf \quad 
(\text{mod } p^2\ZZ_{(p)}[[q]]).
$$
By \eqref{Zag-eq-1} we write 
$$(D^pf)_0=D^pf-pH(E_2),$$ 
where $H(E_2)=\sum_{j=1}^{p} h_{(j)} E_2^j$ and 
$h_{(j)}:=\frac{1}{p}{{p}\choose{j}} {{p+k-1}\brack{p+k-1-j}}\frac{f_{p-j}}{12^j}\in M_{k+2(p-j)}(\ZZ_{(p)},\Gamma_1(4))$. 
Therefore, we have 
$$(D^pf)_0 \equiv
(D^{p^2}f)_0-p\sum_{j=1}^{p} h_{(j)} E_{p-1}^{2p-j} 
E_{p+1}^j \quad (\text{mod } p^2\ZZ_{(p)}[[q]]),$$
using the fact that 
$E_{p-1}\equiv 1$ $(\text{mod } p\ZZ_{(p)}[[q]])$ and 
$E_{p+1}\equiv E_2$ $(\text{mod } p\ZZ_{(p)}[[q]])$.
Let $$g:=(D^{p^2}f)_0-p\sum_{j=1}^{p} h_{(j)} E_{p-1}^{2p-j} 
E_{p+1}^j.$$
We note that  
$g\in M_{k+2p^2}(\ZZ_{(p)},\Gamma_1(4))$, 
$(D^{p}f)_0\in M_{k+2p}(\ZZ_{(p)},\Gamma_1(4))$ 
and their reductions satisfy
$\overline{g}=\overline{(D^{p}f)_0}$ in $(\ZZ/p^2\ZZ)[[q]]$; so, we have $w_{p^2}(\overline{g})\le (k+2p^2)-2p(p-1)$.
Now, we claim that 
\begin{equation}\label{Kob-Lan-eq-1}
\nu_p(g)\ge 2. 
\end{equation}
We first assume that $g\not\equiv 0$ $(\text{mod }p\ZZ_{(p)}[[q]])$. 
By applying Theorem \ref{Kob-Lan} to $\overline{g}$, 
there exists an isobaric element $F(X,Y)$ of $\ZZ_{(p)}[X,Y]$ such that $\overline{\cG}(g;X,Y)=
\overline{F}(X,Y)\overline{\cA}_p(X,Y)^p$ in $(\ZZ/p^2\ZZ)[X,Y]$. Let $g':=F(\Theta,F_2)\in M_{k+p^2+p}(\ZZ_{(p)},\Gamma_1(4))$.
Then we have $\overline{g'}=\overline{g}=\overline{(D^{p}f)_0} $ in $(\ZZ/p^2\ZZ)[[q]]$, $g'\not\equiv 0$ $(\text{mod }p\ZZ_{(p)}[[q]])$ and $w_{p^2}(\,\overline{g'}\,)\le (k+p^2+p)-p(p-1)$. 
By applying Theorem \ref{Kob-Lan} to $\overline{g'}$, 
$\overline{\cA}_p(X,Y)^{p}$ divides $\overline{F}(X,Y)$ 
in $(\ZZ/p^2\ZZ)[X,Y]$. Therefore, we have that 
$\overline{\cA}_p(X,Y)^{2p}\mid\overline{\cG}(g;X,Y)$ in $(\ZZ/p^2\ZZ)[X,Y]$; thus, we have $\nu_p(g)\ge2$.
Secondly, we assume that $g\equiv 0$ $(\text{mod }p\ZZ_{(p)}[[q]])$ and $g\not\equiv 0$ $(\text{mod }p^2\ZZ_{(p)}[[q]])$. Let $u_{(0)}:=\frac{1}{p}g$. 
Then $u_{(0)}\in M_{k+2p^2}(\ZZ_{(p)},\Gamma_1(4))$, $\frac{1}{p}(D^{p}f)_0\in M_{k+2p}(\ZZ_{(p)},\Gamma_1(4))$, 
$\overline{u}_{(0)}=\overline{\tfrac{1}{p}(D^{p}f)_0}\neq 0$ in $(\ZZ/p\ZZ)[[q]]$ and $w_p(\overline{u}_{(0)})
=w_p\big(\overline{\tfrac{1}{p}(D^{p}f)_0}\big)\le (k+2p^2)-2p(p-1)$. 
For $1\le j\le p$, we define $u_{(j)}$ recursively as follows. 
By applying Theorem \ref{Kob-Lan} to $\overline{u}_{(j-1)}$, there exists an isobaric element $F_{(j)}(X,Y)$ of $\ZZ_{(p)}[X,Y]$ such that $\overline{\cG}\big(u_{(j-1)};X,Y\big)=
\overline{F}_{(j)}(X,Y)\overline{\cA}_p(X,Y)$ in $(\ZZ/p\ZZ)[X,Y]$. Let $u_{(j)}:=F_{(j)}(\Theta,F_2)\in M_{k+2p^2-j(p-1)}(\ZZ_{(p)},\Gamma_1(4))$. 
Then $\overline{u}_{(j)}=\overline{u}_{(j-1)}=\overline{\tfrac{1}{p}(D^{p}f)_0}\neq 0$ in $(\ZZ/p\ZZ)[[q]]$ and 
$w_p(\overline{u}_{(j)})\le \big(k+2p^2-j(p-1)\big)-(2p-j)(p-1)$. 
Hence, we have that
$\overline{\cG}\big(u_{(0)};X,Y\big)
=\overline{\cG}\big(u_{(1)};X,Y\big)\overline{\cA}_p(X,Y)
=\cdots
=\overline{\cG}\big(u_{(p)};X,Y\big)\overline{\cA}_p(X,Y)^p
$ in $(\ZZ/p\ZZ)[X,Y]$; so, $\nu_p(g)\ge \nu_p(u_{(0)})+1\ge 2$. Finally, we assume that $g\equiv 0$ $(\text{mod }p^2\ZZ_{(p)}[[q]])$; then it is clear that $\nu_p(g)\ge 2$.
Therefore, the claim \eqref{Kob-Lan-eq-1} follows.
Consequently, by \eqref{Zag-eq-2} and \eqref{Kob-Lan-eq-1}, we get 
\vspace{10pt}

$\displaystyle\,\,
\nu_p(D^{p^2}f) 
\ge {\min\left\{\nu_p(D^{p^2}f-(D^{p^2}f)_0), 
\,\,\nu_p(g),\,\, 
\nu_p\bigg(p\sum_{j=1}^{p} h_{(j)} E_{p-1}^{2p-j} E_{p+1}^j\bigg)\right\}} 
\ge 2.$
\end{pf}
\begin{lem}\label{LS-prop-4.4}
Let $p\ge 5$ be a prime. Then 
\begin{enumerate}[(a)]
\item\label{LS-3-2} $\nu_p(D^{2}E_{p-1})\ge 1$,
\item\label{LS-4-4} $\nu_p(D^{p^2}E_{p-1}^p)\ge 3$.
\end{enumerate}
\end{lem}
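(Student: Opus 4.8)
The plan is to treat \eqref{LS-3-2} by a direct computation and \eqref{LS-4-4} by refining the argument of Lemma \ref{LS-prop-4.3}. For \eqref{LS-3-2}, I would apply Proposition \ref{Zag-prop} with $f=E_{p-1}$, $k=p-1$ and $n=2$, so that $n+k-1=p$ and the relevant falling factorials are ${{p}\brack{p}}=1$, ${{p}\brack{p-1}}=p$, ${{p}\brack{p-2}}=p(p-1)$. This gives the polynomial identity
$$D^2E_{p-1}=f_2+2p\,f_1\Big(\tfrac{E_2}{12}\Big)+p(p-1)\,f_0\Big(\tfrac{E_2}{12}\Big)^2\equiv f_2\pmod p,$$
where $f_2$ is a modular form of weight $p+3$, so $\cG(f_2)\in\mathbb{Z}_{(p)}[X,Y]$. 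It then suffices to prove $f_2\equiv0\pmod p$, for then $\cG(D^2E_{p-1})\in p\,\mathbb{Z}_{(p)}[X,Y,Z]\subseteq\langle\cA_p^p,p\rangle$ and $\nu_p(D^2E_{p-1})\ge1$. By von Staudt--Clausen $\ord_p\!\big(\tfrac{2(p-1)}{B_{p-1}}\big)\ge1$, hence $DE_{p-1}\equiv0\pmod p$ and therefore $D^2E_{p-1}\equiv0\pmod p$ as $q$-series. Comparing with the identity, $\overline{f_2}=0$ as a $q$-series while $\overline{\cA}_p\equiv1$, so repeated application of Theorem \ref{Kob-Lan} with $m=1$ (the weight dropping by $p-1$ at each step) forces $\overline{\cA}_p^{\,j}\mid\overline{\cG}(f_2)$ for all $j$, whence $\cG(f_2)\equiv0\pmod p$.

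For \eqref{LS-4-4}, set $I:=\langle\cA_p^p,p\rangle$. The starting point is $\cG(E_{p-1}^p)=\cA_p^p$, so $\nu_p(E_{p-1}^p)=1$. Writing $f_n$ for the auxiliary forms of $f=E_{p-1}^p$ in Proposition \ref{Zag-prop}, one checks $\nu_p(E_2)=\nu_p(E_4)=0$ (their weights lie below $p(p-1)$ and their polynomials are nonzero mod $p$, using \eqref{E4-F2}); hence $\nu_p(Dg)\ge\nu_p(g)$ and $\nu_p(xy)\ge\nu_p(x)+\nu_p(y)$ give $\nu_p(f_{n+1})\ge\min\{\nu_p(f_n),\nu_p(f_{n-1})\}$, and induction from $\nu_p(f_0)=1$, $\nu_p(f_{-1})=\infty$ yields $\nu_p(f_n)\ge1$ for all $n\ge0$. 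I would then run the proof of Lemma \ref{LS-prop-4.3} for $f=E_{p-1}^p$, gaining one extra unit of $I$ at each step. The non-modular part of $D^{p^2}f$ has, by \eqref{Zag-eq-2}, coefficients in $p^2\mathbb{Z}_{(p)}$ and modular factors $f_{p^2-i}$ with $\cG(f_{p^2-i})\in I$, so it lies in $p^2I\subseteq I^3$; and the correction $p\sum_i h_iE_{p-1}^{2p-i}E_{p+1}^i$ lies in $p\cdot I\cdot I=I^3$, because $\nu_p(h_i)\ge1$ and $2p-i\ge p$ force $\cG(h_i),\cA_p^{2p-i}\in I$.

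The delicate contribution is the modular term $W$, for which Lemma \ref{LS-prop-4.3} yields only $\overline{\cA}_p^{2p}\mid\overline{\cG}(W)$ in $\mathbb{Z}/p^2\mathbb{Z}[X,Y]$, i.e.\ $\cG(W)\in\langle\cA_p^{2p},p^2\rangle\subseteq I^2$. To reach $I^3$ I would work one power higher, modulo $p^3$, which is the natural modulus here since $E_{p-1}^p\equiv1\pmod{p^2}$. Concretely, $\nu_p(f_p)\ge1$ means $\overline{\cA}_p^p\mid\overline{\cG}(f_p)$, so the form against which $W$ is compared drops from weight $p^2+p$ to weight $2p$; the available weight drop for $W$ (of weight $3p^2-p$) is then $3p(p-1)$ rather than $2p(p-1)$, that is, three factors $\overline{\cA}_p^{p}$ instead of two. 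Provided the comparison can be made modulo $p^3$, peeling off $\overline{\cA}_p^{p}$ three times through Theorem \ref{Kob-Lan} in $\mathbb{Z}/p^3\mathbb{Z}[X,Y]$ gives $\cG(W)\in\langle\cA_p^{3p},p^3\rangle\subseteq I^3$; together with the two contributions above this yields $\nu_p(D^{p^2}E_{p-1}^p)\ge3$.

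The main obstacle is precisely this upgrade of the filtration comparison from modulus $p^2$ (as in Lemma \ref{LS-prop-4.3}) to modulus $p^3$. The subtlety is that $p$-divisibility of a $q$-expansion is not $\cA_p$-divisibility of the associated polynomial — this is the Hasse phenomenon $\overline{\cA}_p\equiv1$ — so although $E_{p-1}^p\equiv1\pmod{p^2}$ yields the clean $q$-series congruence $D^{p^2}E_{p-1}^p\equiv D(E_{p-1}^p)=p\,E_{p-1}^{p-1}DE_{p-1}\pmod{p^3}$, it cannot be used directly and must be fed through Theorem \ref{Kob-Lan}. Keeping the $p$-adic order and the $\cA_p$-adic order in step simultaneously modulo $p^3$, after the separation into modular and non-modular parts, is the heart of the matter.
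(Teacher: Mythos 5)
Your proof of part \eqref{LS-3-2} is correct but takes a genuinely different route from the paper's. Both arguments start from the same instance of Proposition \ref{Zag-prop}, reducing everything to $\cG\big((E_{p-1})_2;X,Y\big)\equiv 0\pmod{p}$; the paper gets this by citing \cite[Ch.~X, Theorem~7.2]{Lan95} and transporting it to $\mathbb{Z}_{(p)}[X,Y]$ via \eqref{basis-basis-0}, whereas you rederive it: von Staudt--Clausen gives $E_{p-1}\equiv 1\pmod p$, hence $\overline{f_2}=0$ as a $q$-series, and --- correctly recognizing that vanishing of the $q$-expansion does not imply vanishing of the polynomial, precisely because $\overline{\cA}_p(\Theta,F_2)\equiv 1$ --- you iterate Theorem \ref{Kob-Lan} with $m=1$: at each step the quotient is an isobaric polynomial of weight lower by $p-1$, which lifts to a form in $M_{k'}(\mathbb{Z}_{(p)},\Gamma_1(4))$ whose reduction is again the zero $q$-series, so $\overline{\cA}_p^{\,j}\mid\overline{\cG}(f_2)$ for every $j$, which is impossible for a nonzero isobaric polynomial of weight $p+3$. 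This is a valid, self-contained substitute for the citation, which is what it buys you.

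Part \eqref{LS-4-4}, however, has a genuine gap at its central step, and you flag it yourself: with $I:=\langle\cA_p^p,p\rangle$, everything hinges on $\cG(W)\in\langle\cA_p^{3p},p^3\rangle$ for the modular part $W=(D^{p^2}E_{p-1}^p)_0$, and this is introduced with ``provided the comparison can be made modulo $p^3$'' and never established. Your preparatory estimates are fine --- the non-modular part lies in $I^3$ by \eqref{Zag-eq-2} together with $\nu_p(f_{p^2-i})\ge 1$, and the correction term $p\sum_i h_iE_{p-1}^{2p-i}E_{p+1}^i$ lies in $I^3$ --- but those only refine pieces of Lemma \ref{LS-prop-4.3}; the missing third unit for $W$ is the whole content. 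And the upgrade is not routine, because the mod-$p^2$ proof of Lemma \ref{LS-prop-4.3} rests on inputs that fail one power higher. Euler's theorem gives $D^{p^2}f\equiv D^pf$ only modulo $p^2$; your substitute $D^{p^2}E_{p-1}^p\equiv pE_{p-1}^{p-1}DE_{p-1}\pmod{p^3}$ is a correct $q$-series congruence, but its right-hand side carries an $E_2$-component whose coefficient is divisible only by $p$ (indeed $pE_{p-1}^{p-1}DE_{p-1}=pE_{p-1}^{p-1}(E_{p-1})_1+\tfrac{p(p-1)}{12}E_2E_{p-1}^p$), so separating modular parts modulo $p^3$ requires a genuine modular form congruent to $E_2$ modulo $p^2$. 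The paper's device $E_{p-1}\equiv 1$, $E_{p+1}\equiv E_2$ is a mod-$p$ statement only, and no level-one replacement exists mod $p^2$: the $p$-adic limit of the $E_k$ as $k\to 2$ is the $p$-stabilized series $\big(E_2(z)-pE_2(pz)\big)/(1-p)$, which has level $p$. So the three applications of Theorem \ref{Kob-Lan} in $\mathbb{Z}/p^3\mathbb{Z}[X,Y]$ that you envisage have no congruence between actual modular forms to act on; the weight-drop count $3p(p-1)$ is a heuristic, not an argument.

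The paper avoids mod-$p^3$ filtration theory entirely. It expands by the Leibniz rule, $D^{p^2}E_{p-1}^p=\sum_{j_1+\cdots+j_p=p^2}\frac{p^2!}{j_1!\cdots j_p!}(D^{j_1}E_{p-1})\cdots(D^{j_p}E_{p-1})$, and obtains $\nu_p\ge 3$ term by term: if some $j_r$ is prime to $p$, then $p^2$ divides the multinomial coefficient and some $j_s\ge 2$, so \eqref{LS-3-2} supplies the third unit; if every $j_r$ is a multiple of $p$ but none equals $p^2$, then $p$ divides the coefficient and at least two indices are $\ge p\ge 2$, so \eqref{LS-3-2} applies twice; and the terms with some $j_r=p^2$ sum to $pE_{p-1}^{p-1}D^{p^2}E_{p-1}$, where Lemma \ref{LS-prop-4.3} gives $\nu_p\ge 1+2$. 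This trade of $p$-divisibility of multinomial coefficients against derivative estimates already in hand is the idea your proposal is missing; with it, \eqref{LS-4-4} takes three lines and needs no new congruence machinery.
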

\begin{pf}
\eqref{LS-3-2}
By Proposition \ref{Zag-prop} we have
$$D^2E_{p-1}=(E_{p-1})_2+2p(E_{p-1})_1 \frac{E_2}{12} +p(p-1) E_{p-1} \left(\frac{E_2}{12}\right)^2.$$
We note that
$(E_{p-1})_1=DE_{p-1}-\frac{p-1}{12}E_2 E_{p-1}$ and $(E_{p-1})_2=D(E_{p-1})_1-\frac{p+1}{12}E_2(E_{p-1})_1-\frac{p-1}{144}E_4E_{p-1}$.
We thus have that
$$
\begin{array}{rll}
(E_{p-1})_1-\frac{1}{12}E_{p+1}
&\in& pM_{p+1}(\ZZ_{(p)},\Gamma_1(4)),\\
(E_{p-1})_2
&\in & pM_{p+3}(\ZZ_{(p)},\Gamma_1(4))
\end{array}
$$
since $E_{p-1}\equiv 1$ $\big(\text{mod }p\ZZ_{(p)}[[q]]\big)$, $E_{p+1}\equiv E_2$ $\big(\text{mod }p\ZZ_{(p)}[[q]]\big)$, and $DE_2=(E_2^2-E_4)/12$.
By Lemma \ref{p-integral-coefficients} we have that 
$\cG\big((E_{p-1})_1;X,Y\big)\in\ZZ_{(p)}[X,Y]$ and
$\cG\big((E_{p-1})_2;X,Y\big)\in p\ZZ_{(p)}[X,Y]$; so, we get
$\cG(D^2E_{p-1};X,Y,Z)\in p\ZZ_{(p)}[X,Y,Z]$.\\
\noindent
\eqref{LS-4-4}
By the product rule, we have
$$D^{p^2}E_{p-1}^p=\sum_{j_1+\cdots+j_p=p^2} \frac{p^2!}{j_1!\cdots j_p!}(D^{j_1}E_{p-1})\cdots (D^{j_p}E_{p-1}).$$
\noindent
If $p\nmid j_r$ for some $r$, then
$p^2 \mid \frac{p^2!}{j_1!\cdots j_p!}$ and there exists $s$ such that $j_s\ge 2$; so, by \eqref{LS-3-2} we have that 
$\nu_p\big(\frac{p^2!}{j_1!\cdots j_p!}(D^{j_1}E_{p-1})\cdots (D^{j_p}E_{p-1})\big)\ge 3$.
If $p\mid j_r$ and $j_r\neq p^2$ for every  $r$, then $p \mid \frac{p^2!}{j_1!\cdots j_p!}$ and there exist two different $s_1$, $s_2$ such that $j_{s_1}\ge p$, $j_{s_2}\ge p$; thus, by \eqref{LS-3-2} we see that
$\nu_p\big(\frac{p^2!}{j_1!\cdots j_p!}(D^{j_1}E_{p-1})\cdots (D^{j_p}E_{p-1})\big)\ge 3$.
The set of the remaining ordered pairs is $\cC:=\bigcup_{r=1}^{p}\{(j_1,\cdots,j_p) \mid j_r=p^2,\,j_s=0\,\text{ for all }\,s\neq r\}$. By Lemma \ref{LS-prop-4.3} we have that
$\nu_p\big(\sum_{\cC}\frac{p^2!}{j_1!\cdots j_p!}(D^{j_1}E_{p-1})\cdots (D^{j_p}E_{p-1})\big)=\nu_p(p E_{p-1}^{p-1}D^{p^2}E_{p-1})\ge 3$. 
Therefore, we get $\nu_p(D^{p^2}E_{p-1}^p)\ge 3$.
\end{pf}

\begin{lem}\label{LS-prop-4.5}
Let $p\ge 5$ be a prime and $k\in\ZZ$ or $\frac{1}{2}+\ZZ$.
For $f\in M_k(\ZZ_{(p)},\Gamma_1(4))$, we have
$\nu_p(D^{p^2}f)\ge \max\{\nu_p(f)+1,2\}$.
\end{lem}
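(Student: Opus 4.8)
The plan is to deduce the bound in two pieces. The lower bound $\nu_p(D^{p^2}f)\ge 2$ is exactly Lemma~\ref{LS-prop-4.3}, so it remains to prove $\nu_p(D^{p^2}f)\ge \nu_p(f)+1$; combining the two gives $\nu_p(D^{p^2}f)\ge\max\{\nu_p(f)+1,2\}$. I would establish $\nu_p(D^{p^2}f)\ge \nu_p(f)+1$ by induction on $n$, in the form: for every admissible $k$ and every $f\in M_k(\mathbb{Z}_{(p)},\Gamma_1(4))$ with $\nu_p(f)\ge n$, one has $\nu_p(D^{p^2}f)\ge n+1$. The base case $n=0$ is immediate from Lemma~\ref{LS-prop-4.3}, which yields $\nu_p(D^{p^2}f)\ge 2\ge 1$.

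For the inductive step I would use the hypothesis $\nu_p(f)\ge n$, i.e. $\cG(f;X,Y)\in\langle\cA_p^p,p\rangle^n$, to separate the pure $p$-power part. Writing $\cG(f;X,Y)=\sum_{a+b=n}p^a\cA_p^{pb}c_{a,b}$ with homogeneous $c_{a,b}\in\mathbb{Z}_{(p)}[X,Y]$ of the appropriate weights, the $b=0$ summand is $p^n c_{n,0}$ while the $b\ge1$ summands share a factor $\cA_p^p$. This produces a decomposition $f=p^nG+E_{p-1}^pH$ with $G,H\in M_\ast(\mathbb{Z}_{(p)},\Gamma_1(4))$, where the cofactor of $\cA_p^p$ lies in $\langle\cA_p^p,p\rangle^{n-1}$, so that $\nu_p(H)\ge n-1$. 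Here I would note that membership of the modular polynomial $\cG(f;X,Y)$ in the ideal of $\mathbb{Z}_{(p)}[X,Y,Z]$ descends to $\mathbb{Z}_{(p)}[X,Y]$, since the ideal is extended from $\mathbb{Z}_{(p)}[X,Y]$ under the free ring extension $\mathbb{Z}_{(p)}[X,Y]\hookrightarrow\mathbb{Z}_{(p)}[X,Y][Z]$.

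Applying $D^{p^2}$ and the property $\nu_p(Dg)\ge\nu_p(g)$, the term $p^nD^{p^2}G$ satisfies $\nu_p(p^nD^{p^2}G)\ge n+\nu_p(D^{p^2}G)\ge n+2$ (using $\nu_p(p)\ge1$ and Lemma~\ref{LS-prop-4.3}). For the remaining part I would expand $D^{p^2}(E_{p-1}^pH)=\sum_{i=0}^{p^2}\binom{p^2}{i}(D^iE_{p-1}^p)(D^{p^2-i}H)$ and bound each summand by $\nu_p\big(\binom{p^2}{i}\big)+\nu_p(D^iE_{p-1}^p)+\nu_p(D^{p^2-i}H)$, splitting into three cases. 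For $i=0$ the inductive hypothesis applied to $H$ gives $\nu_p(D^{p^2}H)\ge n$, and with $\nu_p(E_{p-1}^p)\ge1$ the total is $\ge n+1$. For $i=p^2$ neither the binomial coefficient $\binom{p^2}{p^2}=1$ nor the inductive hypothesis helps, so I would invoke Lemma~\ref{LS-prop-4.4}\eqref{LS-4-4}, giving $\nu_p(D^{p^2}E_{p-1}^p)+\nu_p(H)\ge 3+(n-1)=n+2$. For $0<i<p^2$, Kummer's theorem gives $\ord_p\binom{p^2}{i}=2-\ord_p i\ge1$, while $\nu_p(D^iE_{p-1}^p)\ge1$ for every $i\ge1$ because $DE_{p-1}^p=pE_{p-1}^{p-1}DE_{p-1}$ carries an explicit factor of $p$ and $\nu_p$ cannot decrease under $D$; together with $\nu_p(D^{p^2-i}H)\ge\nu_p(H)\ge n-1$ this again yields $\ge n+1$. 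Taking the minimum over all summands completes the induction.

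The main obstacle, and the reason both Lemma~\ref{LS-prop-4.3} and Lemma~\ref{LS-prop-4.4}\eqref{LS-4-4} are genuinely needed, is the extreme term $i=p^2$: there the binomial coefficient contributes no $p$-divisibility and the inductive hypothesis does not apply to $H$ (which is not differentiated), so the required extra unit of valuation must come entirely from the sharp bound $\nu_p(D^{p^2}E_{p-1}^p)\ge3$. The other delicate point is arranging the decomposition $f=p^nG+E_{p-1}^pH$ so that $G$ and $H$ are honest $p$-integral modular forms of the correct weights, to which the preceding lemmas legitimately apply.
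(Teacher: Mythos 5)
Your proof is correct, and it rests on the same essential ingredients as the paper's --- the ideal-membership decomposition of $\cG(f;X,Y)$, the Leibniz rule, $p$-divisibility of binomial coefficients, and Lemmas \ref{LS-prop-4.3} and \ref{LS-prop-4.4}\eqref{LS-4-4} invoked at exactly the same two critical spots --- but it packages them differently. The paper expands $f=\sum_i p^{\nu_p(f)-i}h_iE_{p-1}^{ip}$ in one shot and applies the multinomial Leibniz rule to $D^{p^2}\big(h_i(E_{p-1}^p)^i\big)$, running a case analysis over tuples $(j_0,j_1,\dots,j_i)$; you instead induct on $n=\nu_p(f)$, peeling off a single factor via $f=p^nG+E_{p-1}^pH$ with $\nu_p(H)\ge n-1$, so that only the binomial expansion of $D^{p^2}(E_{p-1}^pH)$ is needed and the $i=0$ term is absorbed by the inductive hypothesis. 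The trade is bookkeeping for induction: you avoid the paper's multi-index cases (its ``$j_0=0$, all $j_r\neq p^2$'' case is replaced by your $0<i<p^2$ and $i=0$ cases), at the mild cost of stating the induction uniformly over all weights $k$ so that it legitimately applies to $H\in M_{k-p(p-1)}(\mathbb{Z}_{(p)},\Gamma_1(4))$, which you do. You also make explicit two points the paper passes over silently: that membership of $\cG(f;X,Y)$ in $\langle\cA_p^p,p\rangle^n\subset\mathbb{Z}_{(p)}[X,Y,Z]$ descends to $\mathbb{Z}_{(p)}[X,Y]$ (the ideal being extended along the free adjunction of $Z$), and that the cofactors may be taken isobaric so that $G$ and $H$ are honest $p$-integral modular forms of the correct weights. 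All your individual estimates check out, including $\ord_p\binom{p^2}{i}=2-\ord_p(i)\ge 1$ for $0<i<p^2$ (for the middle terms the simpler observation $\nu_p(D^iE_{p-1}^p)\ge\nu_p(E_{p-1}^p)=1$ already suffices, without the explicit factor of $p$ in $DE_{p-1}^p$), so the proposal is a valid, and arguably cleaner, reorganization of the paper's argument rather than a new method.
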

\begin{pf}
We have 
$f=\sum_{r=0}^{\nu_p(f)}p^{\nu_p(f)-r}h_{(r)} E_{p-1}^{rp}$, where 
$h_{(r)}\in M_{k-rp(p-1)}(\ZZ_{(p)},\Gamma_1(4))$.
By the product rule, we have that
\begin{eqnarray*}
D^{p^2}(h_{(r)}E_{p-1}^{rp})
&=&\sum_{j_0=0}^{p^2} {{p^2}\choose{j_0}}(D^{j_0}h_{(r)})(D^{p^2-j_0}E_{p-1}^{rp})\\
&=&\sum_{j_0=0}^{p^2} {{p^2}\choose{j_0}}(D^{j_0}h_{(r)})
\sum_{j_1+\cdots+j_r=p^2-j_0}\frac{(p^2-j_0)!}{j_1!\cdots j_r!}(D^{j_1}E_{p-1}^p)\cdots (D^{j_r}E_{p-1}^p).
\end{eqnarray*}
If $j_0=0$ and $j_s\neq p^2$ for all $1\le s \le r$, then $p\mid \frac{p^2!}{j_1!\cdots j_r!}$; 
thus, we have 
$$\nu_p\bigg(h_{(r)}\sum_{\substack{j_1+\cdots+j_r=p^2\\p^2\nmid j_s}}\frac{p^2!}{j_1!\cdots j_r!}(D^{j_1}E_{p-1}^p)\cdots (D^{j_r}E_{p-1}^p)\bigg)\ge r+1$$
since $\nu_p(D^{j_s}E_{p-1}^p)\ge \nu_p(E_{p-1}^p)=1$.
If $j_0=0$ and $j_s=p^2$ for some $1\le s\le r$, then $\nu_p\big(h_{(r)}\sum_{s=1}^{r} E_{p-1}^{(r-1)p}
(D^{p^2}E_{p-1}^p)\big)\ge r+2$ by Lemma \ref{LS-prop-4.4} \eqref{LS-4-4}.
If $1\le j_0<p^2$, then $p\mid {{p^2}\choose{j_0}}$; 
hence we have $\nu_p\big({{p^2}\choose{j_0}}(D^{j_0}h_{(r)})(D^{p^2-j_0}E_{p-1}^{rp})\big)\ge r+1$ as $\nu_p(D^{p^2-j_0}E_{p-1}^{rp})\ge \nu_p(E_{p-1}^{rp})=r$.
If $j_0=p^2$, then $\nu_p\big((D^{p^2}h_{(r)})E_{p-1}^{rp}\big)\ge r+2$ by Lemma \ref{LS-prop-4.3}.
Therefore, we have $\nu_p\big(D^{p^2}(h_{(r)}E_{p-1}^{rp})\big)\ge r+1$; hence by Lemma \ref{LS-prop-4.3} we get $\nu_p(D^{p^2}f)\ge \max\{\nu_p(f)+1,2\}$.
\end{pf}

\begin{prop}\label{LS-lem-4.8} 
Let $p\ge 5$ be a prime and $k\in\ZZ$ or $\frac{1}{2}+\ZZ$.
For $f\in M_k(\ZZ_{(p)},\Gamma_1(4))$ and 
$m\in\NN$, we have 
$\nu_p(D^{mp^2}f)\ge m+1$.
\end{prop}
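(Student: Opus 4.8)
The plan is to prove the statement by induction on $m$, the case $m=1$ being exactly Lemma \ref{LS-prop-4.3}. For the inductive step I would write $D^{(m+1)p^2}f=D^{p^2}(D^{mp^2}f)$ and put $g:=D^{mp^2}f$, so that the inductive hypothesis furnishes $\nu_p(g)\ge m+1$; the goal becomes $\nu_p(D^{p^2}g)\ge m+2$. The tempting move is to feed $g$ into the ``$+1$'' gain of Lemma \ref{LS-prop-4.5}, but that lemma is available only for genuine modular forms, while $g$ is merely quasimodular. This mismatch is the heart of the matter.

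To isolate it I would split $g=g_0+(g-g_0)$ into its modular part and the remaining part. The modular part is harmless: since $\nu_p(g_0)\ge\nu_p(g)\ge m+1$ and $g_0$ is modular, Lemma \ref{LS-prop-4.5} gives at once $\nu_p(D^{p^2}g_0)\ge\max\{\nu_p(g_0)+1,\,2\}\ge m+2$. Hence the whole problem is pushed onto the non-modular term $D^{p^2}(g-g_0)$.

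The main obstacle is exactly this non-modular contribution. From $\nu_p(Dh)\ge\nu_p(h)$ and $\nu_p(g-g_0)\ge\min\{\nu_p(g),\nu_p(g_0)\}\ge m+1$ one gets the free bound $\nu_p\big(D^{p^2}(g-g_0)\big)\ge m+1$, which falls short of the target by a single unit, and this deficit cannot be closed by any soft argument: for a general quasimodular form there is simply no ``$+1$'' gain under $D^{p^2}$ (already $\nu_p(D^{p^2}E_2)=\nu_p(E_2)=0$), because the derivatives of $E_2$ carry no $\nu_p$-gain, in contrast with the powers of $E_{p-1}$, which do gain by Lemma \ref{LS-prop-4.4}\eqref{LS-4-4}. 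To recover the missing unit I would expand $g-g_0$ by means of Zagier's formula (Proposition \ref{Zag-prop}, together with the congruences \eqref{Zag-eq-1} and \eqref{Zag-eq-2}) as a sum $\sum_{i\ge1}a_i E_2^{i}$ with modular coefficients $a_i$, apply the Leibniz rule to each $D^{p^2}(a_iE_2^{i})$, and bound the resulting pieces by invoking Lemma \ref{LS-prop-4.5} on the modular factor $a_i$ (for the summand carrying all $p^2$ derivatives on $a_i$) and the $p$-divisibility of $\binom{p^2}{a}$ via Kummer's theorem (for $0<a<p^2$). The truly delicate terms — and the step I expect to be the hardest — are those in which no derivative lands on $a_i$: these are individually too small to be controlled termwise, so one must instead use the filtration criterion of Theorem \ref{Kob-Lan} to show that, together with the modular-part contribution, the total $D^{(m+1)p^2}f$ still lies in $\langle\cA_p^{p},p\rangle^{m+2}$. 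In other words, the crux is to make visible the cancellation that the crude quasi-valuation inequality cannot detect.
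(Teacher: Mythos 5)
Your skeleton (induction on $m$, base case Lemma \ref{LS-prop-4.3}, Zagier's expansion via Proposition \ref{Zag-prop}, Lemma \ref{LS-prop-4.5} for the modular part) matches the paper's proof, but the step you yourself flag as the crux is a genuine gap: you assert that the terms of $D^{p^2}(a_iE_2^i)$ in which no derivative lands on the modular factor are ``individually too small to be controlled termwise'' and must be handled by a cancellation argument through Theorem \ref{Kob-Lan}, which you never carry out --- and which is dubious on its face, since Theorem \ref{Kob-Lan} is a statement about genuine modular forms, while the total $D^{(m+1)p^2}f$ is only quasimodular, so one cannot feed it into that criterion without first removing $E_2$, which is exactly the problem at hand. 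In fact no cancellation is needed, because your reduction discards the arithmetic information that closes the deficit. Writing $D^{np^2}f=\sum_{i=0}^{np^2}a_if_{np^2-i}\big(\frac{E_2}{12}\big)^i$ with $a_i={{np^2}\choose{i}}{{np^2+k-1}\brack{np^2+k-1-i}}$, the paper observes that $p^2\mid{{np^2}\choose{i}}$ for $1\le i<p$, that $p$ divides both ${{np^2}\choose{i}}$ and the bracket for $p\le i<p^2$, and that $p^{p\lfloor i/p^2\rfloor}$ divides the bracket for $i\ge p^2$; moreover $f_{np^2-i}$ is the modular part of $D^{np^2-i}f$, so $\nu_p(f_{np^2-i})\ge\nu_p\big(D^{(n-\lceil i/p^2\rceil)p^2}f\big)\ge n-\lceil i/p^2\rceil+1$. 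Combining these gives $\nu_p(a_if_{np^2-i})\ge n+2$ for \emph{every} $i\ge1$ \emph{before} any derivative is applied, so the trivial inequality $\nu_p(D^{p^2}h)\ge\nu_p(h)$ already suffices for all nonconstant powers of $E_2$; only the $i=0$ term $f_{np^2}$ needs the ``$+1$'' gain of Lemma \ref{LS-prop-4.5}, exactly as in your treatment of $g_0$. No Leibniz splitting of $D^{p^2}(a_iE_2^i)$, and no appeal to Theorem \ref{Kob-Lan}, occurs in the inductive step.

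Two corrections follow for your plan. First, the bound $\nu_p(f_{np^2-i})\ge n-\lceil i/p^2\rceil+1$ for $i\ge p^2$ invokes the induction hypothesis at all lower levels, so you must run strong induction (``for all $m\le n$''), not just the single top-level hypothesis $\nu_p(g)\ge m+1$; with only the latter, the one-unit deficit you identify is genuinely unclosable, as your example $\nu_p(D^{p^2}E_2)=\nu_p(E_2)=0$ correctly suggests. Second, your diagnosis that the crude termwise estimate fails is an artifact of bounding the coefficients only through $\nu_p(g-g_0)\ge m+1$: once the explicit $p$-divisibility of Zagier's coefficients is retained, the termwise bound is $n+(p-1)\lfloor i/p^2\rfloor+1$ for $i\ge p^2$ and $n+2$ for $1\le i<p^2$, closing the gap with room to spare. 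So your proposal is right about where the difficulty sits but wrong about its resolution, and as written the proof is incomplete at precisely that point.
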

\begin{proof} 
We prove that $\nu(D^{mp^2}f)\ge m+1$ by induction on $m$. If $m=1$, we have $\nu_p(D^{p^2}f)\ge 2$ by Lemma \ref{LS-prop-4.3}. 
Suppose that for all $m\le n$, $\nu_p(D^{mp^2}f)\ge m+1$.
By Proposition \ref{Zag-prop} we have
$$\displaystyle D^{np^2}f=\sum_{j=0}^{np^2}a_j f_{np^2-j}\big(\frac{E_2}{12}\big)^j,$$
where $a_j:={{np^2}\choose{j}} {{np^2+k-1}\brack{np^2+k-1-j}}$. 
Then we have that
$$\nu_p\big(D^{(n+1)p^2}f\big)
\ge \min_{0\le j\le np^2}
\left\{ 
\nu_p\left(D^{p^2}\left(a_{j}f_{np^2-j} \left(\frac{E_2}{12}\right)^j\right) \right) \right\}
\ge \min_{0\le j\le np^2} \{c_j\},
$$
where 
$c_0:=\nu_p\big(D^{p^2} f_{np^2}\big)$ and 
$c_j:=\nu_p\big(a_{j}f_{np^2-j} \big)$ for $1\le j \le np^2$.

By Lemma \ref{LS-prop-4.5}
it follows that
$\nu_p(f_{np^2}) \ge  \nu_p (D^{np^2}f)\ge n+1$ and $\nu_p(D^{p^2}f_{np^2})\ge \nu_p(f_{np^2})+1$; so, we get $c_0\ge n+2$.
If $1\le j \le p^2$, then $\nu_p(f_{np^2-j})\ge \nu_p(D^{np^2-j}f)\ge 
\nu_p(D^{(n-1)p^2}f)\ge n$. 
Also, we have that $p^2\mid {{np^2}\choose{j}}$ for $1\le j <p$, and $p$ divides both ${{np^2}\choose{j}}$ 
and ${{np^2+k-1}\brack{np^2+k-1-j}}$ for $p\le j <p^2$; thus, we have $c_j\ge n+2$ for $1\le j<p^2$.
If $p^2\le j \le np^2$, then $p^{p\lfloor j/p^2\rfloor}\mid {{np^2+k-1}\brack{np^2+k-1-j}}$ and 
$\nu_p(f_{np^2-j})\ge \nu_p(D^{(n-\lceil j/p^2\rceil)p^2}f)\ge n-\lceil j/p^2\rceil+1$; hence we obtain 
$c_j\ge n+2$ for $p^2\le j \le np^2$.
Therefore, we get $\nu_p(D^{(n+1)p^2}f)\ge n+2$.
\end{proof}
\smallskip

\section{Proofs of Theorems \ref{main-thm-0} and \ref{cor-thm-0}}\label{sec-5}
In this section we prove Theorem \ref{main-thm-0} and Theorem \ref{cor-thm-0}.
For $z=x+iy\in\CC$, we denote 
$$E_2^\ast(z):=E_2(z)-\frac{3}{\pi y}.$$

\begin{lem}\label{LS-lem-2.7}
Let $k\in\ZZ$ or $\frac{1}{2}+\ZZ$ and $f\in M_k(\CC,\Gamma_1(4))$.
Then $\partial^n f=\cG(D^nf ; \Theta,F_2,E_2^\ast)$.
\end{lem}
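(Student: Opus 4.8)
The plan is to prove the identity by induction on $n$, after reformulating it as a statement about how the Shimura--Maass operator acts on the three generators $\Theta$, $F_2$, $E_2^\ast$. The base case $n=0$ is immediate: since $f\in M_k(\mathbb{C},\Gamma_1(4))=\mathbb{C}[\Theta,F_2]$ is genuinely modular, the polynomial $\cG(f;X,Y,Z)$ does not involve $Z$, so $\cG(f;\Theta,F_2,E_2^\ast)=\cG(f;\Theta,F_2,E_2)=f=\partial^0 f$. For the inductive step I would use that $\partial^n f$ is nearly holomorphic of weight $k+2n$, so that $\partial^{n+1}f=\partial_{k+2n}(\partial^n f)$, and reduce the whole claim to the following operator-level identity: for every isobaric $g\in\mathbb{C}[\Theta,F_2,E_2]$ of weight $w$,
\[
\partial_w\big(\cG(g;\Theta,F_2,E_2^\ast)\big)=\cG(Dg;\Theta,F_2,E_2^\ast).
\]
Granting this with $g=D^n f$ (so $Dg=D^{n+1}f$ and $w=k+2n$) and invoking the induction hypothesis gives $\partial^{n+1}f=\cG(D^{n+1}f;\Theta,F_2,E_2^\ast)$, completing the induction.

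To establish the displayed identity I would record two structural facts. First, the family $\{\partial_w\}$ is a weight-respecting derivation: if $A$, $B$ have weights $a$, $b$, then $\partial_{a+b}(AB)=(\partial_a A)B+A(\partial_b B)$, which follows at once from $\partial_w=D-\frac{w}{4\pi y}$ and the Leibniz rule for $D$. By the chain rule it therefore suffices to evaluate $\partial$ on $\Theta$, $F_2$, $E_2^\ast$. Second, from $E_2^\ast=E_2-\frac{3}{\pi y}$ one reads off the substitution identity $\frac{w}{4\pi y}=\frac{w}{12}(E_2-E_2^\ast)$. Using this together with \eqref{D-basis}, the computations for the modular generators are immediate: since $\frac{1/2}{4\pi y}=\frac{1}{24}(E_2-E_2^\ast)$ and $\frac{2}{4\pi y}=\frac16(E_2-E_2^\ast)$, the $E_2$-terms in $D\Theta$ and $DF_2$ get corrected to $E_2^\ast$-terms, yielding $\partial_{1/2}\Theta=(\Theta E_2^\ast-\Theta^5+80\Theta F_2)/24$ and $\partial_2 F_2=(F_2 E_2^\ast+5\Theta^4 F_2-16F_2^2)/6$; that is, each is exactly $D\Theta$, $DF_2$ with $E_2$ replaced by $E_2^\ast$.

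The genuinely computational point---and the step I expect to be the main obstacle---is the generator $E_2^\ast$, whose $D$-derivative carries a nonholomorphic correction. Here I would first compute $D\tfrac1y=\tfrac{1}{4\pi y^2}$, which gives $D(E_2-E_2^\ast)=D\tfrac{3}{\pi y}=\tfrac{1}{12}(E_2-E_2^\ast)^2$; combining this with $DE_2=(E_2^2-E_4)/12$ from \eqref{D-basis} and $\frac{2}{4\pi y}E_2^\ast=\frac16(E_2-E_2^\ast)E_2^\ast$, the quadratic terms cancel and one obtains $\partial_2 E_2^\ast=((E_2^\ast)^2-E_4)/12$. Since $E_4$ involves no $E_2$ by \eqref{E4-F2}, this is once more precisely $DE_2$ with $E_2$ replaced by $E_2^\ast$.

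With the three generator formulas in hand, the derivation property forces the displayed operator identity for all isobaric $g$: both sides are produced by the same chain-rule expansion, in which every occurrence of $E_2$ appears instead as $E_2^\ast$. I do not expect any subtlety beyond the $E_2^\ast$ computation above, since the algebraic independence of $\Theta$, $F_2$, $E_2$ (which makes $\cG$ well defined) is already part of the setup, and the weight bookkeeping---$D^n f$ isobaric of weight $k+2n$ and $\partial$ raising weight by $2$---matches throughout. Assembling these pieces gives the claim.
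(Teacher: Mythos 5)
Your proof is correct and takes essentially the same route as the paper: the paper defines the substitution $\phi:E_2\mapsto E_2^\ast$, verifies the intertwining $\partial\circ\phi=\phi\circ D$ on the generators via \eqref{D-basis} and \eqref{Theta-E_2}, and concludes by the same induction on $n$. The only difference is that you additionally derive the generator formulas \eqref{Theta-E_2} (including the computation $D\tfrac{3}{\pi y}=\tfrac{1}{12}(E_2-E_2^\ast)^2$ behind $\partial_2 E_2^\ast=((E_2^\ast)^2-E_4)/12$), which the paper simply records without proof; your computations check out.
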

\begin{pf} 
Let $\phi:\CC[\Theta,F_2,E_2]\to 
\CC[\Theta,F_2,E_2^\ast]$ be the map that sends $E_2$ to $E_2^\ast$. 
It follows that $\partial \circ \phi=\phi \circ D$
from \eqref{D-basis} and
\begin{equation}\label{Theta-E_2}
\left\{
\begin{array}{l}
\partial \Theta =
(\Theta E_2^\ast  -\Theta^5 +80\Theta F_2 )/24,\\
\partial F_2 = 
(F_2 E_2^\ast +5\Theta^4 F_2-16F_2^2)/6,\\
\partial E_2^\ast=
({E_2^\ast}^2-\Theta^8-224\Theta^4F_2-256F_2^2)/12.
\end{array}
\right.
\end{equation}
We show $\partial^n f=\phi\circ\cG(D^nf ; \Theta,F_2,E_2)$ by induction on $n$.
If $n=0$, it is clear. 
Now, suppose that $\partial^n f= \phi\circ D^n f$. Then we have 
$\partial^{n+1}f=\partial\circ\partial^{n}f=\partial\circ \phi\circ D^n f=\phi\circ D^{n+1}f$.
\end{pf}

Let $p\ge5$ be a prime, $k\in \ZZ$ or $\frac{1}{2}+\ZZ$ and $f\in M_k({\ZZ},\Gamma_1(4))$. 
Let $K$ be a quadratic field of discriminant $d<0$,
let $\tau_0$ be a CM point in $K$, and let
$$c_n(f,\tau_0,\Omega):=\frac{\partial^n f(\tau_0)}{\Omega^{2n+k}}.$$
We note that ${\Theta(\tau_0)}/{\Omega_K^{1/2}},{F_2(\tau_0)}/{\Omega_K^2},{E_2^{\ast}(\tau_0)}/{\Omega_K^2}\in\overline{\QQ}$.
Let $F$ be a finite Galois extension of $\QQ$ containing these algebraic numbers, and let $\fp$ be a prime of $F$ lying above $p$. 
There exists the ramification index $e(p,F/\QQ)\in\NN$ such that $\ord_{\fp}\vert_{\QQ}=e(p,F/\QQ)\,\ord_p$, where $\ord_{\fp}:F\to\ZZ$ and $\ord_p:\QQ\to \ZZ$ are the usual discrete valuations. 
We define $\ord_p:F\to \QQ$ to be 
\begin{equation}\label{p-ord}
\ord_p:=\frac{1}{e(p,F/\QQ)}\,\ord_{\fp}.
\end{equation}
By removing denominators of these three algebraic numbers, we choose $\Omega_{\tau_0}$ to be an algebraic multiple of $\Omega_K$ such that for all primes $p\ge5$, 
$$
\ord_{p}\big({\Theta(\tau_0)}/{\Omega_{\tau_0}^{1/2}}\big)\ge 0,
\quad
\ord_{p}\big({F_2(\tau_0)}/{\Omega_{\tau_0}^2}\big)\ge 0,
\quad
\ord_{p}\big({E_2^{\ast}(\tau_0)}/{\Omega_{\tau_0}^2}\big)\ge 0.
$$
Then $\Omega_{\tau_0}$ satisfies \eqref{eq-trascendental} 
since $c_n(f,\tau_0,\Omega_{\tau_0})\in \ZZ_{(6)}[\Theta(\tau_0)/\Omega_{\tau_0}^{1/2},F_2(\tau_0)/\Omega_{\tau_0}^{2},E_2^\ast(\tau_0)/\Omega_{\tau_0}^{2}]$.

\begin{lem}\label{LS-lem-5.1}
With the same notation as above, 
if $p\ge 5$ is a prime such that $(\frac{d}{p})\in\{0,-1\}$, then we have that $c_n(E_{p-1},\tau_0,\Omega_{\tau_0})\equiv 0$ $(\text{mod }p)$. 
\end{lem}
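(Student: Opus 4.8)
The plan is to turn the statement into a polynomial congruence and then dispose of the various values of $n$ separately. First, by Lemma~\ref{LS-lem-2.7} we have $\partial^nE_{p-1}=\cG(D^nE_{p-1};\Theta,F_2,E_2^\ast)$, and $\cG(D^nE_{p-1};X,Y,Z)$ is isobaric of weight $(p-1)+2n$ in the variables $X,Y,Z$ of weights $\tfrac12,2,2$. Writing $a=\Theta(\tau_0)/\Omega_{\tau_0}^{1/2}$, $b=F_2(\tau_0)/\Omega_{\tau_0}^{2}$, $c=E_2^\ast(\tau_0)/\Omega_{\tau_0}^{2}$ and pulling the powers of $\Omega_{\tau_0}$ through the substitution, homogeneity gives $c_n(E_{p-1},\tau_0,\Omega_{\tau_0})=\cG(D^nE_{p-1};a,b,c)$. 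Since $a,b,c$ are $p$-integral by the choice of $\Omega_{\tau_0}$, it suffices to show that $\overline{\cG}(D^nE_{p-1};X,Y,Z)\in\FF_p[X,Y,Z]$ vanishes at $(\bar a,\bar b,\bar c)\in\overline{\FF}_p^{\,3}$.

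For $n=0$ this is precisely the supersingular input. Because $\big(\tfrac dp\big)\in\{0,-1\}$, Deuring's criterion shows that the CM elliptic curve attached to $\tau_0$ is supersingular at a prime $\fp$ above $p$, so the Hasse invariant $\overline E_{p-1}$ vanishes there; as $\cG(E_{p-1};X,Y)=\cA_p(X,Y)$, this reads $\overline{\cA}_p(\bar a,\bar b)=0$, which is the claim for $n=0$.

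For $n\ge2$ I would prove the stronger fact that $\overline{\cG}(D^nE_{p-1};X,Y,Z)$ is the zero polynomial, so that the value at $(\bar a,\bar b,\bar c)$ vanishes for trivial reasons. Expanding with Proposition~\ref{Zag-prop} for $k=p-1$ gives $D^nE_{p-1}=\sum_{i=0}^{n}\binom ni{{n+p-2}\brack{n+p-2-i}}(E_{p-1})_{n-i}\big(\tfrac{E_2}{12}\big)^i$, and I would check that each summand is $\equiv0\pmod{p}$ by splitting on $j:=n-i$. If $j\ge2$ then $(E_{p-1})_j\equiv0\pmod{p}$, which follows by extending the vanishing $\overline{\cG}((E_{p-1})_2;X,Y)\equiv0$ from Lemma~\ref{LS-prop-4.4} to all $j\ge2$ by a two-step induction on the recursion defining the $(E_{p-1})_j$ (the coefficient $\tfrac{j(j+p-2)}{144}$ being $p$-integral). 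If $j=1$ or $j=0$ then the bracket ${{n+p-2}\brack{p-1}}$, resp.\ ${{n+p-2}\brack{p-2}}$, is a product of consecutive integers running through $p$, hence divisible by $p$ once $n\ge2$. Thus every summand vanishes mod $p$ and $\overline{\cG}(D^nE_{p-1};X,Y,Z)=0$.

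The remaining, and most delicate, case is $n=1$, which I expect to be the main obstacle. Here the expansion collapses to $\cG(DE_{p-1};X,Y,Z)=\cG((E_{p-1})_1;X,Y)+\tfrac{p-1}{12}\cA_p(X,Y)Z$; the second term is killed by $\overline{\cA}_p(\bar a,\bar b)=0$, but the first, via $(E_{p-1})_1\equiv\tfrac1{12}E_{p+1}\pmod{p}$ (the congruence already used in Lemma~\ref{LS-prop-4.4}), reduces the claim to the vanishing of $\overline E_{p+1}$ at the supersingular point. Unlike the Hasse invariant, $\overline E_{p+1}$ need not vanish there---there is no nonzero weight-$2$ form on $\Gamma(1)$ through which it could be divided by the Hasse invariant---so supersingularity alone does not settle $n=1$. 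The hard part is therefore exactly this term: one must either supply an additional relation at the CM point forcing $\overline E_{p+1}(\tau_0)\equiv0$, or observe that in the application to Theorem~\ref{main-thm-0} one only ever uses $n\ge(m-1)p^2\ge p^2>1$, so that the cases $n=0$ and $n\ge2$ already deliver everything that is needed.
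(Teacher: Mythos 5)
Your $n=0$ paragraph is, in substance, the paper's entire proof: the paper disposes of this lemma with the bare citation ``See \cite[Lemma 5.1]{LS14}'', and what that cited lemma supplies is precisely the case $n=0$, namely that the normalized Hasse-invariant value $E_{p-1}(\tau_0)/\Omega_{\tau_0}^{p-1}$ vanishes mod $p$ by Deuring's criterion (the sentence you spend on it compresses the actual work in \cite{LS14}, the comparison between the complex values $\Theta(\tau_0)/\Omega_{\tau_0}^{1/2}$, $F_2(\tau_0)/\Omega_{\tau_0}^{2}$ and evaluation on a model of the CM curve with good reduction at a prime above $p$). Your closing diagnosis is also correct, and sharper than you assert: only $n=0$ is ever used, because in the proof of Theorem \ref{main-thm-0} the relevant factor arises from evaluating $\cA_p^{ip}$ at $(\Theta(\tau_0),F_2(\tau_0))$, hence equals $c_0(E_{p-1},\tau_0,\Omega_{\tau_0})^{ip}$; indeed the powers of $\Omega_{\tau_0}$ in that display balance only with exponent $2n+k$ when the factor is $c_0$, so the printed ``$c_n$'' there (and in the lemma) is a slip for $c_0$. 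Moreover, your worry at $n=1$ is not a gap you failed to close but an actual falsity of the statement as printed: with the paper's own choice in Section \ref{sec-5} ($\tau_0=i/2$, $\Omega_{\tau_0}=2^{-5/4}a^2$ with $a$ as in the proof of Theorem \ref{cor-thm-0}), take $p=7\equiv 3\ (\mathrm{mod}\ 4)$, which is inert in $\mathbb{Q}(\sqrt{-4})$, so $E_{p-1}=E_6$. Using $DE_6=(E_2E_6-E_4^2)/2$, Lemma \ref{LS-lem-2.7}, and the values $E_2^\ast(i/2)=-\tfrac{3}{2}a^4$, $E_4(i/2)=\tfrac{33}{4}a^8$, $E_6(i/2)=-\tfrac{189}{8}a^{12}$ (the last from \eqref{E4-F2} with $\Theta(i/2)=a$, $F_2(i/2)=\tfrac{1}{32}a^4$), one computes $c_1(E_6,\tau_0,\Omega_{\tau_0})=-\tfrac{261}{16}\cdot 2^{10}=-16704\equiv 5\ (\mathrm{mod}\ 7)$; equivalently the normalized value of $E_{p+1}=E_4^2$ there is $\equiv 4\not\equiv 0$, which is exactly the obstruction you identified.

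The one genuine defect in what you do claim to prove is the base of your two-step induction for $n\ge 2$: $p$-integrality of the coefficient $j(j+p-2)/144$ is not enough to start it. From the recursion, $(E_{p-1})_3=D(E_{p-1})_2-\tfrac{p+3}{12}E_2(E_{p-1})_2-\tfrac{2p}{144}E_4(E_{p-1})_1$, and since $(E_{p-1})_1\equiv\tfrac{1}{12}E_{p+1}\not\equiv 0\ (\mathrm{mod}\ p)$, the vanishing of $(E_{p-1})_3$ mod $p$ needs the extra observation that $j(j+p-2)$ is divisible by $p$ at $j=2$ (it equals $2p$, a coincidence special to weight $k=p-1$); your parenthetical ``$p$-integral'' suggests you overlooked that $(E_{p-1})_1$ re-enters here with a unit coefficient unless one notices this. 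Once that base case is patched, $(E_{p-1})_j\equiv 0$ for all $j\ge 2$ does follow, and together with your bracket divisibilities for $j\in\{0,1\}$ (which check out for $n\ge 2$) you get $\overline{\cG}(D^nE_{p-1};X,Y,Z)=0$ for all $n\ge 2$ --- stronger than anything the paper records, which has only the case $n=2$ (Lemma \ref{LS-prop-4.4}\eqref{LS-3-2}). So, patched at $j=3$, your proposal proves the lemma for $n=0$ and $n\ge 2$ and refutes it for $n=1$; the paper's citation covers only $n=0$; and the honest repair, which your last sentence essentially proposes, is to state the lemma for $n=0$, i.e.\ to replace $c_n$ by $c_0$ both here and in the proof of Theorem \ref{main-thm-0}.
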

\begin{pf}
See \cite[Lemma 5.1]{LS14}.
\end{pf}
\smallskip 

\noindent
{\bf Proof of Theorem \ref{main-thm-0}}\,\,\,
Let $n\ge (m-1) p^2$ with $m>1$.
By Proposition \ref{LS-lem-4.8} we have
$$\cG(D^nf ; X,Y,Z)=\sum_{0\le j \le m}p^{m-j}\cA_p^{jp}\cG_j,$$
where $\cG_j\in \ZZ_{(p)}[X,Y,Z]$. 
By Lemma \ref{LS-lem-2.7} we have
\begin{eqnarray*}
c_n(f,\tau_0,\Omega_{\tau_0}) 
&=&\frac{\cG\big(D^nf ; \Theta(\tau_0),F_2(\tau_0),E_2^\ast(\tau_0)\big)}{\Omega_{\tau_0}^{2n+k}}\\
&=&\sum_{0\le j \le m}p^{m-j}
c_n(E_{p-1},\tau_0,\Omega_{\tau_0})^{jp}
\frac{\cG_j\big(\Theta(\tau_0),F_2(\tau_0),E_2^\ast(\tau_0)\big)}{\Omega_{\tau_0}^{2n+k-jp(p-1)}}.
\end{eqnarray*}
By Lemma \ref{LS-lem-5.1} we have $c_n(f,\tau_0,\Omega_{\tau_0})\equiv 0$ $(\text{mod }p^m)$.
\qed\\
\smallskip

\noindent
{\bf Proof of Theorem \ref{cor-thm-0}}\,\,\,
The Jacobi theta functions are defined as 
\begin{eqnarray*}
\theta_2(z)=\sum_{n=-\infty}^{\infty}e^{\pi i (n+1/2)^2 z},
\quad
\theta_3(z):=\sum_{n=-\infty}^{\infty}e^{\pi i n^2 z},
\quad
\theta_4(z):=\sum_{n=-\infty}^{\infty}(-1)^n e^{\pi i n^2 z}.
\end{eqnarray*}
Let $\displaystyle a:=\frac{\gamma(1/4)}{2^{1/2}\pi^{3/4}}$.
It is well-known that 
\begin{eqnarray*}
\theta_2(i/2)=2^{3/8}a,
\qquad
\theta_3(i/2)=\frac{(2^{1/2}+1)^{1/2}}{2^{1/4}}a 
\qquad\text{and}\qquad
\Theta(i/2)=\theta_3(i)=a.
\end{eqnarray*}
(We refer to \cite[pp. 325]{Ber98}. We note that $\theta_2(z)=2e^{\frac{\pi i z}{4}}\psi(e^{2\pi iz})$ and 
$\theta_3(z)=\varphi(e^{\pi iz})$, where $\psi$ and $\varphi$ are defined in \cite[pp. 323]{Ber98}.)
Since $\theta_4^4=\theta_3^4-\theta_2^4$,
we have that 
$$\theta_4(i/2)=\frac{(2^{1/2}-1)^{1/2}}{2^{1/4}}a.$$
Furthermore, we have that 
$$E_4(i/2)=\frac{33}{4}a^8$$
as $E_4(z)=\frac{1}{2}\big(\theta_2(z)^8+\theta_3(z)^8+\theta_4(z)^8\big)$ (see \cite[pp. 28-29]{Zag08}).
By \eqref{E4-F2} we have that 
$$F_2(i/2)=\frac{1}{32}a^4.$$
From \eqref{Theta-E_2} and $\partial \Theta(i/2)=0$, we have that
$$E_2^\ast(i/2)=-\frac{3}{2}a^4.$$
Let $K:=\QQ(\sqrt{-4})$ and let $\tau_0:=iy_0:=i/2\in K$.
We have $\Omega_K=a^2/2$ and take 
$$\Omega_{\tau_0}:=\frac{1}{2^{1/4}}\Omega_K=\frac{1}{2^{5/4}}a^2.$$
Then $\Omega_{\tau_0}$ satisfies \eqref{eq-trascendental} since
$$
\frac{\Theta(\tau_0)}{\Omega_{\tau_0}^{1/2}}=2^{5/8},
\quad
\frac{F_2(\tau_0)}{\Omega_{\tau_0}^2}=2^{-5/2},
\quad
\frac{E_2^{\ast}(\tau_0)}{\Omega_{\tau_0}^2}=-2^{3/2}\cdot3.
$$
Now, we consider the Taylor series of $\Theta(z)$ around $w=\tau_0$:
\begin{eqnarray}\label{modified-Romik-seq}
(1-w)^{-1/2}\Theta\Big(\frac{\tau_0-\overline{\tau}_0w}{1-w}\Big)
&=&(1-w)^{-1/2}\Theta\Big(\tau_0+\frac{2iy_0w}{1-w}\Big)\nonumber\\
&=&(1-w)^{-1/2}\sum_{r=0}^{\infty}\frac{D^r \Theta(\tau_0)}{r!} \Big(\frac{-4\pi y_0w}{1-w}\Big)^r\nonumber\\
&=&\sum_{n=0}^{\infty}\partial^n\Theta(\tau_0) \frac{(-4\pi y_0 w)^n}{n!}.
\end{eqnarray}
By comparing \eqref{Romik-seq} with \eqref{modified-Romik-seq}, we have that 
$$d(n)=2^{-5/8}\cdot
\frac{\partial^{2n}\Theta(\tau_0)}{\Omega_{\tau_0}^{4n+{1}/{2}}}$$
as $\Theta(z)=\theta_3(2z)$.
By Theorem \ref{main-thm-0} we have that 
$$d(n)\equiv 0 \quad (\text{mod }p^m),$$
where $p\ge5$ is a prime such that $p\equiv 3$ $(\text{mod }4)$, $m\ge 2$, and $n\ge\lceil (m-1)p^2/2\rceil$.
\qed

\smallskip

\section*{Acknowledgement}
The authors were supported by the Basic Science Research Program
through the National Research Foundation of Korea (NRF) funded
by the Ministry of Education (Grant No. 2019R1A6A1A11051177).
J. Kim was also supported
by the Basic Science Research Program
through the National Research Foundation of Korea (NRF) funded
by the Ministry of Education (Grant No. 2020R1I1A1A01074746),
and Y. Lee was also supported
by the National Research Foundation of Korea (NRF) grant funded
by the Korea government (MEST)(NRF-2022R1A2C1003203). 
The authors thank the reviewers for the valuable suggestions. 

\smallskip

\bigskip

\noindent Institute of Mathematical Sciences,

\noindent Ewha Womans University,

\noindent Seoul, Republic of Korea,

\noindent E-mail: jigu.kim@ewha.ac.kr\bigskip

\noindent Department of Mathematics, Ewha Womans University,

\noindent and Korea Institute for Advanced Study

\noindent Seoul, Republic of Korea,

\noindent E-mail: yoonjinl@ewha.ac.kr

\end{document}